\documentclass[12pt]{article}

\usepackage{amsmath, amsthm, amssymb}
\usepackage[all]{xy}
\usepackage{fullpage}
\usepackage{titlesec}
\usepackage{mathrsfs}
\usepackage{amsbsy}
\usepackage{turnstile}
\usepackage{bbm}
\usepackage{yhmath}
\usepackage{tensor}

\usepackage{setspace}

\usepackage[pdftex,bookmarks=true]{hyperref}

\titleformat{\section}{\normalsize\bfseries}{\thesection}{1em}{}
\titleformat{\subsection}{\normalsize\bfseries}{\thesubsection}{1em}{}

\numberwithin{equation}{subsection}

\theoremstyle{plain}

\newtheorem{PropSub}[subsection]{Proposition}
\newtheorem{LemSub}[subsection]{Lemma}
\newtheorem{CorSub}[subsection]{Corollary}
\newtheorem{ThmSub}[subsection]{Theorem}

\theoremstyle{definition}

\newtheorem{DefSub}[subsection]{Definition}
\newtheorem{ExaSub}[subsection]{Example}
\newtheorem{RemSub}[subsection]{Remark}
\newtheorem{ParSub}[subsection]{}

\newcommand*{\emptybox}{\leavevmode\hbox{}}

\newdir{ >}{{}*!/-10pt/@{>}}

\DeclareMathAlphabet{\mathpzc}{OT1}{pzc}{m}{it}
\DeclareMathAlphabet{\mathcalligra}{T1}{calligra}{m}{n}

\newcommand{\bref}[1]{\textnormal{\ref{#1}}}
\newcommand{\pbref}[1]{\textnormal{(\ref{#1})}}

\newcommand{\A}{\ensuremath{\mathscr{A}}}
\newcommand{\B}{\ensuremath{\mathscr{B}}}
\newcommand{\C}{\ensuremath{\mathscr{C}}}

\newcommand{\E}{\ensuremath{\mathscr{E}}}

\newcommand{\sH}{\ensuremath{\mathscr{H}}}
\newcommand{\I}{\ensuremath{\mathscr{I}}}
\newcommand{\J}{\ensuremath{\mathscr{J}}}

\newcommand{\M}{\ensuremath{\mathscr{M}}}
\newcommand{\N}{\ensuremath{\mathscr{N}}}

\newcommand{\V}{\ensuremath{\mathscr{V}}}

\newcommand{\X}{\ensuremath{\mathscr{X}}}
\newcommand{\Y}{\ensuremath{\mathscr{Y}}}

\newcommand{\uX}{\ensuremath{\underline{\mathscr{X}}}}

\newcommand{\uV}{\ensuremath{\underline{\mathscr{V}}}}

\newcommand{\CAT}{\ensuremath{\operatorname{\textnormal{\textbf{CAT}}}}}

\newcommand{\TT}{\ensuremath{\mathbb{T}}}

\newcommand{\mor}{\ensuremath{\operatorname{\textnormal{\textsf{Mor}}}}}

\newcommand{\dom}{\ensuremath{\operatorname{\textnormal{\textsf{dom}}}}}
\newcommand{\cod}{\ensuremath{\operatorname{\textnormal{\textsf{cod}}}}}
\newcommand{\Iso}{\ensuremath{\operatorname{\textnormal{\textsf{Iso}}}}}
\newcommand{\Epi}{\ensuremath{\operatorname{\textnormal{\textsf{Epi}}}}}
\newcommand{\StrEpi}{\ensuremath{\operatorname{\textnormal{\textsf{StrEpi}}}}}
\newcommand{\Mono}{\ensuremath{\operatorname{\textnormal{\textsf{Mono}}}}}
\newcommand{\StrMono}{\ensuremath{\operatorname{\textnormal{\textsf{StrMono}}}}}

\newcommand{\Arr}{\ensuremath{\operatorname{\textnormal{\textsf{Arr}}}}}

\newcommand{\Set}{\ensuremath{\operatorname{\textnormal{\textbf{Set}}}}}
\newcommand{\Two}{\ensuremath{\operatorname{\textnormal{\textbf{2}}}}}
\newcommand{\SET}{\ensuremath{\operatorname{\textnormal{\textbf{SET}}}}}

\newcommand{\subs}{\ensuremath{\subseteq}}

\newcommand{\op}{\ensuremath{\textnormal{op}}}

\newcommand{\pushoutcorner}{\ar@{}[dr]|(.3)\ulcorner}
\newcommand{\pullbackcorner}{\ar@{}[dr]|(.3)\lrcorner}

\begin{document}

\author{\normalsize  Rory B. B. Lucyshyn-Wright\thanks{The author gratefully acknowledges financial support in the form of an NSERC Postdoctoral Fellowship, as well as earlier, partial financial support in the form of an Ontario Graduate Scholarship.}\let\thefootnote\relax\footnote{Keywords: factorization systems; factorisation systems; enriched categories; strong monomorphisms; strong epimorphisms; monoidal categories; closed categories}\footnote{2010 Mathematics Subject Classification: 18A20, 18A30, 18A32, 18D20}
\\
\small University of Ottawa, 585 King Edward Ave., Ottawa, ON, Canada K1N 6N5}

\title{\large \textbf{Enriched factorization systems}}

\date{}

\maketitle

\abstract{In a paper of 1974, Brian Day employed a notion of factorization system in the context of enriched category theory, replacing the usual diagonal lifting property with a corresponding criterion phrased in terms of hom-objects.  We set forth the basic theory of such enriched factorization systems.  In particular, we establish stability properties for enriched prefactorization systems, we examine the relation of enriched to ordinary factorization systems, and we provide general results for obtaining enriched factorizations by means of wide (co)intersections.  As a special case, we prove results on the existence of enriched factorization systems involving enriched strong monomorphisms or strong epimorphisms.

}

\section{Introduction} \label{sec:intro}

In informal terms, a factorization system on a category $\B$ consists of suitable classes $\E$ and $\M$ of morphisms in $\B$ such that each morphism of $\B$ factors in an essentially unique way as a morphism in $\E$ followed by a morphism in $\M$.  For example the classes of epimorphisms and monomorphisms in the category of sets constitute a factorization system on $\Set$.  In general categories, epi-mono factorizations of a morphism need not be unique (up to isomorphism) even if they exist, but other suitable choices of $\E$ and $\M$ may be available.  Whereas the essential uniqueness of factorizations was mandated directly in the 1948 axiomatics of Mac Lane \cite{MacL:GrCatsDua}, it was realized in the late 1960's (see \cite{Th:FactLocOrthSubc,EhWy} for references) that this uniqueness can be seen as a consequence of a more basic property of mutual complementarity of the classes $\E$ and $\M$:  One stipulates that each $e \in \E$ be \textit{orthogonal} to every $m \in \M$, meaning that for any commutative square as in the periphery of the following diagram
\begin{equation}\label{eqn:orth}
\xymatrix{
A_1 \ar[r] \ar[d]_e & B_1 \ar[d]^m \\
A_2 \ar[r] \ar@{-->}[ur]|w    & B_2
}
\end{equation}
there exists a unique morphism $w$ making the diagram commute.  For example, in many categories $\B$ (e.g., see \bref{rem:str_img_factns_ordinary}, \bref{exa:quasitopos}), orthogonal $(\E,\M)$-factorizations are obtained by setting $\E = \Epi\B$ and taking $\M$ to consist of the \textit{strong} monomorphisms, i.e. those monos to which each epi is orthogonal; dually, one can take $\E$ to consist of the strong epis and $\M$ all monos.  In the example of $\B = \Set$, and indeed in any topos, these canonical choices of factorization system coincide \pbref{exa:quasitopos}.

Realizing that the criterion for orthogonality \eqref{eqn:orth} is equally the statement that the square
\begin{equation}\label{eq:orthog_pb}
\xymatrix {
\B(A_2,B_1) \ar[rr]^{\B(A_2,m)} \ar[d]_{\B(e,B_1)}  & & \B(A_2,B_2) \ar[d]^{\B(e,B_2)} \\
\B(A_1,B_1)  \ar[rr]^{\B(A_1,m)}                     & & \B(A_1,B_2)
}
\end{equation}
be a pullback in $\Set$, Day \cite{Day:AdjFactn} implicitly generalized the notion of factorization system (by then codified in generality in \cite{FrKe}) to the context of \textit{enriched categories}, for which the \textit{hom-objects} $\B(A,B)$ lie in some given monoidal category $\V$, rather than $\Set$.  In this context, one demands that \eqref{eq:orthog_pb} be a pullback in $\V$, thus obtaining a stronger notion of \textit{$\V$-enriched orthogonality}, together with ensuing enriched notions of \textit{prefactorization system} and \textit{factorization system} by analogy with \cite{FrKe}.

Yet to date there has been no substantial published account of the basic theory of such enriched factorization systems and prefactorization systems, notwithstanding their use in a certain special case in \cite{Ke:Ba} \S 6.1, substantial work on related notions in \cite{Ang:thesis,Ang:SemiInitFin}, and a brief treatment of enriched weak factorization systems in \cite{Rie}.  Filling this gap, we establish stability properties for enriched prefactorization systems (\S \bref{sec:stab_canc_enr_prefactn}), we examine the relation of enriched to ordinary factorization systems (\S \bref{sec:charns}), and we provide general results for obtaining enriched factorizations by means of wide (co)intersections (\S \bref{sec:enr_factn_det_cls_monos}).  As a special case, we provide results on the existence of enriched $(\text{Epi},\text{Strong mono})$-factorizations and $(\text{Strong epi},\text{Mono})$-factorizations, where here the notions of epi, mono, strong epi, and strong mono are interpreted in an enriched sense that is in general distinct from the ordinary sense (\S \bref{sec:enr_str_monos}, \bref{sec:enr_factn_det_cls_monos}).  We show that one or both of these canonical enriched factorization systems exists in broad classes of examples, including closed locally presentable or topological categories, categories of algebras of algebraic theories enriched over such \cite{BoDay}, as well as categories of models of weighted-limit sketches enriched over a closed locally presentable category \cite{Ke:Ba}.

The theory of enriched factorization systems was studied and employed in the author's recent Ph.D. thesis \cite{Lu:PhD} in providing a basis for abstract functional analysis in a closed category.

\section{Preliminaries on enriched categories} \label{sec:enr_monos_lims}

In what follows, we work in the context of the theory of categories enriched in a  symmetric monoidal category $\V$, as documented in the seminal paper \cite{EiKe} and the comprehensive references \cite{Ke:Ba}, \cite{Dub}.  We shall include an explicit indication of $\V$ when employing notions such as $\V$-category, $\V$-functor, and so on, omitting the prefix $\V$ only when concerned with the corresponding notions for non-enriched or \textit{ordinary} categories.  When such ordinary notions and terminology are applied to a given $\V$-category $\A$, they should be interpreted relative to the underlying ordinary category of $\A$.  In the absence of any indication to the contrary, we will assume throughout that $\V$ is a \textit{closed} symmetric monoidal category, and in this case we denote by $\uV$ the $\V$-category canonically associated to $\V$, whose underlying ordinary category is isomorphic to $\V$; in particular, the internal homs in $\V$ will therefore be denoted by $\uV(V_1,V_2)$.  We do not assume that any limits or colimits exist in $\V$.

\begin{ParSub}\label{par:cat_classes}
The ordinary categories $\C$ considered in this paper are \textit{not} assumed \textit{locally small}---that is, they are not necessarily $\Set$-enriched categories.  Rather, we assume that for each category $\C$ under consideration, there is a category $\SET$ of \textit{classes} in which lie the hom-classes of $\C$, so that $\C$ is $\SET$-enriched, but $\SET$ is not assumed cartesian closed.
\end{ParSub}

The following notions are defined in \cite{Dub}.

\begin{DefSub} \label{def:enr_mono_limit}
Let $\B$ be a $\V$-category.
\begin{enumerate}
\item A morphism $m:B_1 \rightarrow B_2$ in $\B$ (i.e., in the underlying ordinary category of $\B$) is a \textit{$\V$-mono(morphism)} if $\B(A,m):\B(A,B_1) \rightarrow \B(A,B_2)$ is a monomorphism in $\V$ for every object $A$ of $\B$.  A morphism $e$ in $\B$ is a \textit{$\V$-epi(morphism)} if $e$ is a $\V$-mono in $\B^\op$.
\item $\Mono_\V\B$ and $\Epi_\V\B$ are the classes of all $\V$-monos and $\V$-epis, respectively, in $\B$.
\item A \textit{$\V$-limit} of an ordinary functor $D:\J \rightarrow \B$ consists of a cone for $D$ that is sent by each functor $\B(A,-):\B \rightarrow \V$ ($A \in \B$) to a limit cone for $\B(A,D-)$.  Equivalently, a $\V$-limit is a limit of $D$ that is preserved by each functor $\B(A,-)$.  As special cases of $\V$-limits we define \textit{$\V$-products, $\V$-pullbacks, $\V$-fibre-products}, etc.  \textit{$\V$-colimits} are defined as $\V$-limits in $\B^\op$.
\end{enumerate}
\end{DefSub}
\begin{RemSub}
$\V$-limits coincide with the \textit{conical limits} of \cite{Ke:Ba}.  Note that every $\V$-mono (resp. $\V$-epi, $\V$-limit, $\V$-colimit) in $\B$ is a mono (resp. epi, limit, colimit) in (the underlying ordinary category of) $\B$.
\end{RemSub}

\begin{PropSub} \label{thm:lim_mono_epi_in_tens_cot_vcat_are_enr}
Any ordinary mono (resp. limit) in a tensored $\V$-category $\B$ is a $\V$-mono (resp. $\V$-limit).  Dually, any ordinary epi (resp. colimit) in a cotensored $\V$-category $\B$ is a $\V$-epi (resp. $\V$-colimit).  Hence if $\B$ is tensored, then $\Mono_{\V}\B = \Mono\B$; if $\B$ is cotensored, then $\Epi_{\V}\B = \Epi\B$.
\end{PropSub}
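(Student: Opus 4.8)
The plan is to reduce everything to the two assertions about \emph{tensored} $\V$-categories, since the dual assertions about cotensored $\V$-categories follow by passage to $\Op{\B}$, and the final ``Hence'' follows from the preceding Remark. Indeed, $\Op{\B}$ is again a $\V$-category, a cotensor in $\B$ is precisely a tensor in $\Op{\B}$, an ordinary epi (resp. colimit) in $\B$ is an ordinary mono (resp. limit) in $\Op{\B}$, and by Definition~\bref{def:enr_mono_limit} a $\V$-epi (resp. $\V$-colimit) in $\B$ is exactly a $\V$-mono (resp. $\V$-limit) in $\Op{\B}$; hence the cotensored statements are instances of the tensored ones applied to $\Op{\B}$. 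For the ``Hence'', the Remark gives $\Mono_\V\B \subseteq \Mono\B$ and $\Epi_\V\B \subseteq \Epi\B$ unconditionally, so the two new inclusions supplied by the Proposition yield the claimed equalities.

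The single tool driving both tensored assertions is the adjunction isomorphism defining the tensor. Writing $I$ for the unit object of $\V$ and letting $\B_0$ denote the underlying ordinary category of $\B$, I would first record that for each object $V$ of $\V$ and objects $A, B$ of $\B$ there is a bijection
\[
\B_0(V \otimes A, B) \;\cong\; \V(V, \B(A,B)),
\]
natural in all three variables, obtained by applying the underlying-set functor $\V(I,-)$ to the defining $\V$-isomorphism $\B(V \otimes A, B) \cong \uV(V, \B(A,B))$ of the tensor and then using the canonical bijection $\V(I, \uV(V,W)) \cong \V(V,W)$. Under this bijection, post-composition with a morphism $g : B \to B'$ on the left corresponds to post-composition with $\B(A,g)$ on the right, by naturality in $B$.

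For the mono assertion, let $m : B_1 \to B_2$ be an ordinary mono in $\B$ and fix an object $A$; I must show that $\B(A,m)$ is a monomorphism in $\V$. Given $V$ and a pair $p,q : V \to \B(A,B_1)$ in $\V$ with $\B(A,m) \circ p = \B(A,m) \circ q$, the bijection transports $p,q$ to morphisms $\tilde p, \tilde q : V \otimes A \to B_1$ in $\B_0$ satisfying $m \circ \tilde p = m \circ \tilde q$; since $m$ is an ordinary mono this forces $\tilde p = \tilde q$, whence $p = q$. For the limit assertion, let $(L, (\lambda_j)_j)$ be an ordinary limit cone for $D : \J \to \B$; I claim $\B(A,-)$ sends it to a limit cone in $\V$. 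I would verify this through the representables $\V(V,-)$, which jointly detect limit cones (a statement about universal properties requiring no completeness of $\V$): applying $\V(V,-)$ together with the bijection above turns the image cone $(\B(A,L), (\B(A,\lambda_j))_j)$ into the cone obtained by applying $\B_0(V \otimes A, -)$ to $(L, (\lambda_j)_j)$, and the latter is a limit cone in $\Set$ precisely because $(L, (\lambda_j)_j)$ is an ordinary limit in $\B_0$, tested at the object $V \otimes A$.

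The routine-but-essential point to get right --- and the only real obstacle --- is the bookkeeping of naturality: one must check that the bijection carries the structure maps of the image cone (resp. the map $\B(A,m)$) to the structure maps of the original cone (resp. to $m$), so that the comparison maps and the cancellation conditions genuinely correspond on the two sides. Everything else is formal, and the symmetry of $\V$ is what legitimises the passage to $\Op{\B}$ used for the dual statements.
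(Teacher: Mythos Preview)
Your argument is correct and is essentially the paper's proof written out in full: the paper observes in a single line that when $\B$ is tensored, each ordinary functor $\B(A,-) : \B \to \V$ is right adjoint to $(-) \otimes A : \V \to \B$ and therefore preserves limits and monomorphisms, which is precisely what your adjunction-bijection computations against the representables $\V(V,-)$ verify by hand. Your explicit treatment of the dual statements and the final ``Hence'' is fine, though the paper leaves these implicit.
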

\begin{proof}
If $\B$ is tensored, then each ordinary functor $\B(B,-):\B \rightarrow \V$ is right adjoint (to $(-) \otimes B:\V \rightarrow \B$) and hence preserves limits and monos.
\end{proof}

\begin{PropSub} \label{prop:enr_mono_kp}
A morphism $m:B \rightarrow C$ in a $\V$-category $\B$ is a $\V$-mono if and only if $m$ has a $\V$-kernel-pair $\pi_1,\pi_2:P \rightarrow B$ with $\pi_1 = \pi_2$.
\end{PropSub}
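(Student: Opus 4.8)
The plan is to reduce both implications to the elementary fact that, in any category, a morphism is a monomorphism if and only if its kernel pair exists and has equal projections, and then to transport this across the hom-functors $\B(A,-):\B \to \V$ by invoking the characterization of $\V$-limits in \bref{def:enr_mono_limit} as ordinary limits that are preserved by every $\B(A,-)$. Throughout one uses that, by definition, $m$ is a $\V$-mono precisely when each $\B(A,m):\B(A,B) \to \B(A,C)$ is a monomorphism in $\V$.

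For the forward implication, suppose $m$ is a $\V$-mono. I would claim that the square whose upper and left edges are both $\id_B$ and whose lower and right edges are both $m$ is a $\V$-pullback, that is, a $\V$-kernel-pair of $m$ with $\pi_1 = \pi_2 = \id_B$. To verify this it suffices, by \bref{def:enr_mono_limit}, to check that applying $\B(A,-)$ produces a pullback in $\V$ for each object $A$; the resulting square has upper and left edges $\id_{\B(A,B)}$ and lower and right edges $\B(A,m)$, and such a square is a pullback precisely when $\B(A,m)$ is a monomorphism, which holds since $m$ is a $\V$-mono. Hence the identity span is a $\V$-kernel-pair of $m$ with $\pi_1 = \pi_2$.

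For the converse, suppose $m$ admits a $\V$-kernel-pair $\pi_1,\pi_2:P \to B$ with $\pi_1 = \pi_2$. Since this $\V$-limit is preserved by each $\B(A,-)$, the object $\B(A,P)$ equipped with the projections $\B(A,\pi_1)$ and $\B(A,\pi_2)$ is a kernel pair of $\B(A,m)$ in $\V$. From $\pi_1 = \pi_2$ we obtain $\B(A,\pi_1) = \B(A,\pi_2)$, so this kernel pair has equal projections, whence $\B(A,m)$ is a monomorphism by the elementary fact recalled above. As $A$ was arbitrary, $m$ is a $\V$-mono.

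The only real subtlety, and the step I would treat most carefully, lies in the forward direction: one must confirm that the trivial span $(\id_B,\id_B)$ genuinely constitutes a $\V$-pullback of $m$ along itself, and not merely an ordinary one. By the defining preservation property of $\V$-limits this reduces, for each $A$, to the elementary observation that the square formed by $\B(A,m)$ against itself along two identity edges is a pullback exactly when $\B(A,m)$ is monic. Notably this argument never presupposes that any limits exist in $\V$, in keeping with the standing conventions of the paper: the required $\V$-kernel-pair is simply $B$ itself.
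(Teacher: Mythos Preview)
Your proof is correct and follows essentially the same approach as the paper's: for the forward direction you exhibit $(1_B,1_B)$ as a $\V$-kernel-pair by checking that each $\B(A,-)$ sends it to a pullback (which the paper summarizes as ``one readily checks''), and for the converse you apply $\B(A,-)$ and invoke the ordinary-category fact, exactly as in the paper. Your version simply makes explicit the details the paper leaves to the reader.
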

\begin{proof}
If $m$ is a $\V$-mono then $1_B,1_B:B \rightarrow B$ is a $\V$-kernel-pair in $\B$, as one readily checks.  Conversely, if $m$ has a $\V$-kernel-pair with $\pi_1 = \pi_2$, then since each functor $\B(A,-):\B \rightarrow \V$ ($A \in \B$) preserves the given kernel pair, the needed conclusion follows from the analogous result for ordinary categories, which is immediate.
\end{proof}

\begin{CorSub}\label{prop:rvadj_pres_vmonos}
Every right $\V$-adjoint $\V$-functor preserves $\V$-monos.  Dually, every left $\V$-adjoint $\V$-functor preserves $\V$-epis.
\end{CorSub}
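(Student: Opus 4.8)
The plan is to reduce the statement to the stability of monomorphisms in $\V$ under isomorphism, using the defining hom-object isomorphism of an enriched adjunction. Let $G \colon \B \to \A$ be a right $\V$-adjoint $\V$-functor, with left $\V$-adjoint $F \colon \A \to \B$; the adjunction furnishes an isomorphism $\A(A, GB) \cong \B(FA, B)$ in $\V$, $\V$-natural in $A \in \Op{\A}$ and $B \in \B$. Given a $\V$-mono $m \colon B_1 \to B_2$ in $\B$ and an arbitrary object $A$ of $\A$, I would invoke $\V$-naturality in the variable $B$ to obtain a commutative square in $\V$ whose horizontal edges are $\A(A, Gm)$ and $\B(FA, m)$ and whose vertical edges are the two instances of the adjunction isomorphism.

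From here the first assertion is immediate: since $m$ is a $\V$-mono, $\B(FA, m)$ is a monomorphism in $\V$ by Definition \bref{def:enr_mono_limit}, and as the vertical edges of the square are isomorphisms, $\A(A, Gm)$ is a monomorphism in $\V$ as well. Because $A$ was an arbitrary object of $\A$, this says exactly that $Gm$ is a $\V$-mono, so $G$ preserves $\V$-monos.

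For the dual statement, let $L \colon \A \to \B$ be a left $\V$-adjoint $\V$-functor, with right $\V$-adjoint $R$. Then $\Op{L} \colon \Op{\A} \to \Op{\B}$ is itself a right $\V$-adjoint $\V$-functor, with left $\V$-adjoint $\Op{R}$, and by Definition \bref{def:enr_mono_limit} a $\V$-epi in $\A$ is precisely a $\V$-mono in $\Op{\A}$. Applying the first assertion to $\Op{L}$ shows that $\Op{L}$ preserves $\V$-monos, which, read back in $\A$ and $\B$, says that $L$ preserves $\V$-epis.

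The only point requiring care --- and the reason the argument genuinely lives in the enriched setting --- is that the adjunction isomorphism must be an isomorphism of hom-objects in $\V$, not merely of underlying hom-sets, so that $\A(A, Gm)$ and $\B(FA, m)$ are comparable as morphisms of $\V$; this is exactly what distinguishes a right $\V$-adjoint $\V$-functor from an ordinary right adjoint, and it is what upgrades the conclusion from ordinary monos to $\V$-monos. As an alternative to the naturality square, one could instead argue through Proposition \bref{prop:enr_mono_kp}: a right $\V$-adjoint preserves $\V$-limits, hence $\V$-kernel-pairs, and so carries the $\V$-kernel-pair $1_{B_1}, 1_{B_1}$ of a $\V$-mono $m$ to a $\V$-kernel-pair $1_{GB_1}, 1_{GB_1}$ of $Gm$ with equal projections, whence $Gm$ is again a $\V$-mono.
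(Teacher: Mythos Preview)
Your primary argument is correct and takes a genuinely different route from the paper. The paper's proof is the one-liner ``Right $\V$-adjoints preserve $\V$-limits, so the result follows from \bref{prop:enr_mono_kp}'' --- precisely the alternative you sketch in your final paragraph. Your main approach instead works directly from the defining hom-object isomorphism $\A(A,GB)\cong\B(FA,B)$ and its $\V$-naturality in $B$, reducing the claim to the fact that a morphism in $\V$ isomorphic (in the arrow category) to a monomorphism is again a monomorphism. This is more elementary in that it avoids both the kernel-pair characterization of $\V$-monos and any appeal to preservation of $\V$-limits; on the other hand, the paper's route is shorter in context, since \bref{prop:enr_mono_kp} has just been established and the preservation of $\V$-limits by right $\V$-adjoints is standard background. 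Your remark about needing the enriched (rather than merely ordinary) adjunction isomorphism is well placed and identifies exactly where the enrichment enters in your version of the argument.
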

\begin{proof}
Right $\V$-adjoints preserve $\V$-limits, so the result follows from \bref{prop:enr_mono_kp}.
\end{proof}

\begin{PropSub} \label{prop:intersection}
Let $(m_i:B_i \rightarrow C)_{i \in \I}$ be a family of $\V$-monos in $\V$-category $\B$, where $\I$ is a class, and let $m:B \rightarrow C$ be a $\V$-fibre-product of this family, with associated projections $\pi_i:B \rightarrow B_i$ ($i \in \I$).  Then $m$ is a $\V$-mono.
\end{PropSub}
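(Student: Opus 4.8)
The plan is to test the $\V$-mono condition object-by-object, thereby reducing the claim to the elementary fact that in an ordinary category the fibre-product morphism of a family of monomorphisms is again a monomorphism; this is the same reduction strategy as in the proof of \bref{prop:enr_mono_kp}.

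First I would fix an arbitrary object $A$ of $\B$ and apply the ordinary functor $\B(A,-):\B \rightarrow \V$. Since a $\V$-fibre-product is a $\V$-limit and is therefore preserved by $\B(A,-)$ \bref{def:enr_mono_limit}, the object $\B(A,B)$ together with the morphisms $\B(A,\pi_i)$ ($i \in \I$) is a fibre-product in $\V$ of the family $\bigl(\B(A,m_i):\B(A,B_i) \rightarrow \B(A,C)\bigr)_{i \in \I}$, and the associated fibre-product morphism is $\B(A,m):\B(A,B) \rightarrow \B(A,C)$, since $m = m_i \pi_i$ gives $\B(A,m) = \B(A,m_i)\,\B(A,\pi_i)$. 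Because each $m_i$ is a $\V$-mono, each $\B(A,m_i)$ is a monomorphism in $\V$.

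It thus remains to prove the following ordinary-categorical statement and to apply it with $f_i := \B(A,m_i)$: if $(f_i:X_i \rightarrow Y)_{i \in \I}$ is a family of monomorphisms admitting a fibre-product $f:X \rightarrow Y$ with projections $p_i:X \rightarrow X_i$, then $f$ is a monomorphism. Indeed, given $g,h:Z \rightarrow X$ with $fg = fh$, the identities $f = f_i p_i$ yield $f_i p_i g = f_i p_i h$, whence $p_i g = p_i h$ for every $i$ since each $f_i$ is mono; but then $g$ and $h$ induce the same cone over the defining diagram, so $g = h$ by the uniqueness clause of the universal property of the fibre-product. Applying this with $f_i = \B(A,m_i)$ shows that $\B(A,m)$ is a monomorphism in $\V$, and as $A$ was arbitrary, $m$ is a $\V$-mono.

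I do not expect any genuine obstacle here: as in \bref{prop:enr_mono_kp}, the entire content lies in the preservation of the $\V$-fibre-product by the representables $\B(A,-)$ together with the elementary fact above, and the only point requiring care is the bookkeeping of the two universal properties. In particular, no smallness hypothesis on the class $\I$ is needed, since the argument invokes only the universal property of the wide fibre-product.
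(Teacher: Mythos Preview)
Your proof is correct and follows essentially the same approach as the paper: apply each representable $\B(A,-)$ to reduce to the ordinary-categorical fact that a fibre product of monomorphisms is a monomorphism. The paper merely asserts that this ordinary verification is ``straightforward and elementary,'' whereas you spell it out explicitly; otherwise the arguments are identical.
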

\begin{proof}
For each $A \in \B$, we must show that $\B(A,m):\B(A,B) \rightarrow \B(A,C)$ is mono, but $\B(A,m)$ is a fibre product in $\V$ of the monomorphisms $\B(A,m_i)$.  Hence it suffices to show that the analogous proposition holds for ordinary categories, and in this case the verification is straightforward and elementary.
\end{proof}

\begin{DefSub} \label{def:intersection}
In the situation of \ref{prop:intersection}, we say that the $\V$-mono $m$ is a \textit{$\V$-intersection} of the $m_i$.  
\end{DefSub}
\begin{RemSub}
Definition \ref{def:intersection} is an enriched analogue of the notion of intersection defined for ordinary categories in \cite{Ke:MonoEpiPb}.
\end{RemSub}

\begin{DefSub} \label{def:closed_under_tensors}
Given a class of morphisms $\E$ in a $\V$-category $\B$, we say that $\E$ is \textit{closed under tensors} in $\B$ if for any morphism $e:A_1 \rightarrow A_2$ in $\E$ and any object $V \in \V$ for which tensors $V \otimes A_1$ and $V \otimes A_2$ exist in $\B$, the induced morphism $V \otimes e:V \otimes A_1 \rightarrow V \otimes A_2$ lies in $\E$.  Dually, one defines the property of being \textit{closed under cotensors} in $\B$.
\end{DefSub}

\begin{PropSub} \label{thm:closure_props_of_monos}
For any $\V$-category $\B$, the following hold:
\begin{enumerate}
\item If $g \cdot f \in \Mono_\V\B$, then $f \in \Mono_\V\B$.
\item $\Mono_\V\B$ is closed under composition, cotensors, arbitrary $\V$-fibre-products, and $\V$-pullbacks along arbitrary morphisms in $\B$.
\end{enumerate}
\end{PropSub}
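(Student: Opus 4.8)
The plan is to exploit the defining feature that $m$ is a $\V$-mono precisely when $\B(A,m)$ is a monomorphism in $\V$ for every $A \in \B$, and to reduce each closure property to the corresponding elementary fact about monomorphisms in $\V$ by applying the representable functors $\B(A,-)$. The key point is that each $\B(A,-)$ is an ordinary functor that, by the very definition of $\V$-limit \pbref{def:enr_mono_limit}, preserves whatever $\V$-limit figures in the construction at hand; so each enriched statement should follow formally from its unenriched counterpart applied in $\V$, object $A$ by object $A$. Since no limits are assumed to exist in $\V$ in general, I would only ever invoke the specific structures whose existence is built into each hypothesis.

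For (1), I would apply $\B(A,-)$ to obtain $\B(A,g \cdot f) = \B(A,g) \cdot \B(A,f)$ in $\V$; as the hypothesis makes this composite monic for every $A$, the elementary fact that the first factor of a monic composite is monic gives $\B(A,f) \in \Mono\V$ for every $A$, whence $f \in \Mono_\V\B$. Closure under composition is identical in spirit: $\B(A,-)$ sends a composite of $\V$-monos to a composite $\B(A,g) \cdot \B(A,f)$ of monomorphisms in $\V$, which is again monic. Closure under arbitrary $\V$-fibre-products is already recorded in \bref{prop:intersection}, so there it suffices to cite that result. For closure under $\V$-pullbacks along an arbitrary morphism, I would apply $\B(A,-)$ to the given $\V$-pullback square; because $\B(A,-)$ preserves $\V$-pullbacks, its image is an honest pullback square in $\V$, and since monomorphisms are stable under pullback in any category, the pulled-back leg is sent to a monomorphism for every $A$, as required.

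The one case needing a genuinely extra ingredient is closure under cotensors. Here I would use the defining natural isomorphism $\B(A,\langle V,B\rangle) \cong \uV(V,\B(A,B))$ for the cotensor of $V \in \V$ with $B \in \B$, so that the induced morphism on cotensors satisfies $\B(A,\langle V,m\rangle) \cong \uV(V,\B(A,m))$. The question is then whether $\uV(V,-):\V \to \V$ preserves monomorphisms; but this functor is right adjoint to $(-)\otimes V$, and a right adjoint preserves monomorphisms (a monomorphism $m$ being characterized by a pullback square, namely its identity kernel-pair square, which the right adjoint preserves). Hence $\uV(V,\B(A,m))$ is monic whenever $\B(A,m)$ is, so $\B(A,\langle V,m\rangle)$ is monic for every $A$ and the cotensor lies in $\Mono_\V\B$.

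I expect the only point requiring care is this cotensor case, and specifically the observation that the internal-hom functor $\uV(V,-)$ preserves monomorphisms; everything else is a purely formal consequence of the preservation of $\V$-limits by the representables $\B(A,-)$ together with the elementary facts about monomorphisms in the base category $\V$.
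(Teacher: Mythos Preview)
Your proposal is correct and follows essentially the same approach as the paper: reduce each claim to the corresponding fact about monomorphisms in $\V$ by applying the representables $\B(A,-)$, cite \bref{prop:intersection} for $\V$-fibre-products, and handle cotensors via the isomorphism $\B(A,[V,m]) \cong \uV(V,\B(A,m))$ together with the fact that the right adjoint $\uV(V,-)$ preserves monomorphisms. The only difference is that you spell out the reason $\uV(V,-)$ preserves monos (via the identity kernel-pair), whereas the paper simply asserts it.
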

\begin{proof}
Both 1 and the needed closure under composition follow immediately from the analogous statements for ordinary categories, upon applying each functor $\B(A,-):\B \rightarrow \V$ ($A \in \B$).  We have already established closure under $\V$-fibre-products in \bref{prop:intersection}, and closure under $\V$-pullbacks is proved by an analogous method, since the corresponding statement for ordinary categories holds.  Lastly, given an object $V$ of $\V$ and a $\V$-mono $m:B_1 \rightarrow B_2$ in $\B$ for which cotensors $[V,B_1], [V,B_2]$ exist in $\B$, the induced morphism $[V,m]:[V,B_1] \rightarrow [V,B_2]$ is $\V$-mono, as follows.  Indeed, for each $A \in \B$, $\B(A,[V,m]) \cong \uV(V,\B(A,m))$ in the arrow category $\Arr\V$ \pbref{exa:pb_cart}, and since $\B(A,m)$ is mono and the right-adjoint functor $\uV(V,-):\V \rightarrow \V$ preserves monos, $\uV(V,\B(A,m))$ is mono.
\end{proof}

\begin{DefSub}
If a morphism $m$ in a $\V$-category $\B$ is a $\V$-equalizer of a given pair of morphisms, then since the equalizer diagram is preserved by each functor $\B(C,-):\B \rightarrow \V$ $(C \in \B)$, it follows that $m$ is a $\V$-mono in $\B$.  We call any such $\V$-equalizer $m$ a \textit{$\V$-regular-mono(morphism)} in $\B$.
\end{DefSub}

\begin{PropSub} \label{prop:section_reg_mono}
Every section in a $\V$-category $\B$ is a $\V$-regular-mono in $\B$.
\end{PropSub}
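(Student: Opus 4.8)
The plan is to present the section as a split equalizer and to exploit that split equalizers are absolute, hence preserved by every ordinary hom-functor $\B(D,-):\B\to\V$ ($D\in\B$). Let $m:B\to C$ be a section, and fix a retraction $r:C\to B$ with $r\cdot m=1_B$. I would show that $m$ is the equalizer of the parallel pair $m\cdot r,\ 1_C:C\to C$, and that this equalizer is carried to an equalizer in $\V$ by each $\B(D,-)$; by \bref{def:enr_mono_limit} this exhibits $m$ as a $\V$-equalizer, and therefore as a $\V$-regular-mono.

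First I would verify the equalizer property in the underlying ordinary category by a direct computation. The pair is equalized by $m$, since $r\cdot m=1_B$ gives $(m\cdot r)\cdot m=m\cdot(r\cdot m)=m=1_C\cdot m$. Given any $f:A\to C$ with $(m\cdot r)\cdot f=1_C\cdot f$, i.e.\ $m\cdot r\cdot f=f$, the morphism $r\cdot f:A\to B$ satisfies $m\cdot(r\cdot f)=(m\cdot r)\cdot f=f$; and any $h$ with $m\cdot h=f$ must equal $r\cdot f$, since $h=(r\cdot m)\cdot h=r\cdot(m\cdot h)=r\cdot f$. Thus $m$ is the ordinary equalizer of $m\cdot r$ and $1_C$.

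The essential point is that this equalizer is preserved by every $\B(D,-)$. Here I would observe that the preceding argument used nothing beyond the single identity $r\cdot m=1_B$, and that this identity is functorial: applying $\B(D,-)$ yields $\B(D,r)\cdot\B(D,m)=\B(D,r\cdot m)=\B(D,1_B)=1_{\B(D,B)}$, so $\B(D,m)$ is again a section in $\V$, with retraction $\B(D,r)$. Repeating the same elementary computation one level down---now inside $\V$---shows that $\B(D,m)$ is the equalizer in $\V$ of $\B(D,m\cdot r)=\B(D,m)\cdot\B(D,r)$ and $1_{\B(D,C)}$. Since this holds for every object $D$ of $\B$, the fork $B\xrightarrow{m}C\rightrightarrows C$ is a $\V$-equalizer in the sense of \bref{def:enr_mono_limit}, whence $m$ is a $\V$-regular-mono.

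I expect no genuine computational obstacle; the only thing to watch is conceptual. Being an ordinary regular mono is \emph{not} by itself enough, because the $\V$-notion additionally demands that the equalizer diagram survive application of each $\B(D,-)$. What makes this automatic is that the equalizer at hand is split (equivalently, that $m$ is a section), a property manifestly preserved by any functor; recognizing and using this absoluteness is the crux.
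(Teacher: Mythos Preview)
Your proof is correct and follows essentially the same route as the paper: exhibit the section $m$ as the equalizer of $m\cdot r$ and $1_C$, observe that this is a split (absolute) equalizer, and conclude that it is preserved by every $\B(D,-)$ and hence is a $\V$-equalizer. The paper states this in a single line by invoking the phrase ``absolute equalizer diagram,'' while you unwind the verification explicitly, but the content is the same.
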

\begin{proof}
Supposing that the composite $A \xrightarrow{s} B \xrightarrow{r} A$ in $\B$ is the identity morphism $1_A$, it follows that
$$
\xymatrix{
A \ar[r]^s & B \ar[r]^r \ar@/_1.5ex/[rr]_{1_B} & A \ar[r]^s & B
}
$$
is an absolute equalizer diagram and hence a $\V$-equalizer diagram in $\B$.
\end{proof}

\section{Basic notions} \label{sec:enr_factn_sys}

\begin{DefSub} \label{def:enr_orth_fact}
Let $\B$ be a $\V$-category.
\begin{enumerate}
\item For morphisms $e:A_1 \rightarrow A_2$, $m:B_1 \rightarrow B_2$ in $\B$ we say that $e$ is \textit{$\V$-orthogonal} to $m$, written $e \downarrow_\V m$, if the commutative square
\begin{equation}\label{eqn:orth_pb}
\xymatrix {
\B(A_2,B_1) \ar[rr]^{\B(A_2,m)} \ar[d]_{\B(e,B_1)}  & & \B(A_2,B_2) \ar[d]^{\B(e,B_2)} \\
\B(A_1,B_1)  \ar[rr]^{\B(A_1,m)}                     & & \B(A_1,B_2)
}
\end{equation}
is a pullback in $\V$.

\item Given classes $\E$, $\M$ of morphisms in $\B$, we define
$$\E^{\downarrow_\V} := \{m \in \mor\B \;|\; \forall e \in \E \;:\; e \downarrow_\V m\}\;,$$
$$\M^{\uparrow_\V} := \{e \in \mor\B \;|\; \forall m \in \M \;:\; e \downarrow_\V m\}\;.$$

\item A \textit{$\V$-prefactorization-system} on $\B$ is a pair $(\E,\M)$ of classes of morphisms in $\B$ such that $\E^{\downarrow_\V} = \M$ and $\M^{\uparrow_\V} = \E$.

\item For a pair $(\E,\M)$ of classes of morphisms in $\B$, we say that \textit{(\E,\M)-factorizations exist} if every morphism in $\B$ factors as a morphism in $\E$ followed by a morphism in $\M$.  More precisely, we require an assignment to each morphism $f$ of $\B$ an associated pair $(e,m) \in \E \times \M$ with $f = m \cdot e$.

\item A \textit{$\V$-factorization-system} is a $\V$-prefactorization-system such that (\E,\M)-\linebreak[4]factorizations exist.
\end{enumerate}
\end{DefSub}

\begin{RemSub}
See \bref{thm:crit_factn_sys} for an equivalent definition of the notion of $\V$-factorization-system.
\end{RemSub}

\begin{RemSub}
The given definition of orthogonality relative to $\V$ appears in \cite{Day:AdjFactn}, where the notion of enriched factorization system is also implicitly used.
\end{RemSub}

\begin{RemSub}
The above definitions \pbref{def:enr_orth_fact} apply equally when $\V$ is not assumed closed.  In particular, for any ordinary category $\B$, if we take $\V = \SET$ to be a category of classes \pbref{par:cat_classes} in which lie the hom-classes of $\B$, then we recover the analogous notions for ordinary categories as given in \cite{FrKe}, and for these we omit the indication of $\V$ from the notation.  In particular, orthogonality of morphisms in an ordinary category reduces to the diagonal lifting criterion \eqref{eqn:orth}.

For general $\V$, we note that $\V$-orthogonality implies ordinary orthogonality.
\end{RemSub}

\begin{RemSub}
Given a class of morphisms $\sH$ in a $\V$-category $\B$, both $(\sH^{\downarrow_\V\uparrow_\V},\sH^{\downarrow_\V})$ and $(\sH^{\uparrow_\V},\sH^{\uparrow_\V\downarrow_\V})$ are $\V$-prefactorization-systems on $\B$.
\end{RemSub}

\begin{RemSub} \label{rem:factn_dualization}
Each $\V$-(pre)factorization-system $(\E,\M)$ on a $\V$-category $\B$ determines a $\V$-(pre)factorization-system $(\M,\E)$ on $\B^\op$.
\end{RemSub}

Let us record the following well-known and oft-used observations regarding ordinary orthogonality and hence also enriched orthogonality.  The proofs are straightforward.

\begin{PropSub} \label{thm:orth_yielding_retr_and_iso}
Let $g,f$ be morphisms in a category $\B$ such that the composite $g \cdot f$ is defined.
\begin{enumerate}
\item If $g \cdot f \downarrow g$ then $g$ is a retraction in $\B$.
\item If $f \downarrow g \cdot f$ then $f$ is a section in $\B$.
\item If $f \downarrow f$ then $f$ is an isomorphism.
\end{enumerate}
\end{PropSub}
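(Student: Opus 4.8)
The plan is to instantiate the diagonal-lifting criterion \eqref{eqn:orth} for ordinary orthogonality at three carefully chosen commutative squares, in each case assembled from the given data together with identity morphisms, and then to read off the required section, retraction, or inverse directly from the resulting diagonal filler. Throughout, I write $f:A \to B$ and $g:B \to C$, so that $g \cdot f:A \to C$. Since ordinary orthogonality $e \downarrow m$ asserts exactly that every commutative square with $e$ on the left and $m$ on the right admits a (unique) diagonal filler, the entire argument is a matter of choosing the square correctly and reading the triangle identities.

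For \textbf{1}, I would apply the hypothesis $g \cdot f \downarrow g$ to the square with top edge $f:A \to B$, bottom edge $1_C:C \to C$, left edge $g \cdot f$, and right edge $g$; this square commutes because $g \cdot f = 1_C \cdot (g \cdot f)$. The lower triangle satisfied by the diagonal $w:C \to B$ then reads $g \cdot w = 1_C$, exhibiting $w$ as a section of $g$, so $g$ is a retraction. Part \textbf{2} is formally dual (it is \textbf{1} read in $\B^\op$, per \bref{rem:factn_dualization}): I would apply $f \downarrow g \cdot f$ to the square with top edge $1_A$, bottom edge $g$, left edge $f$, and right edge $g \cdot f$, which commutes since $(g \cdot f)\cdot 1_A = g \cdot f$; the upper triangle now gives $w \cdot f = 1_A$ for the diagonal $w:B \to A$, so $f$ is a section.

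For \textbf{3}, I would apply $f \downarrow f$ to the square whose two vertical edges are both $f$ and whose top and bottom edges are $1_A$ and $1_B$ respectively, which commutes trivially as $f \cdot 1_A = 1_B \cdot f$. The two triangle identities satisfied by the diagonal $w:B \to A$ are $w \cdot f = 1_A$ and $f \cdot w = 1_B$, so $w$ is a two-sided inverse and $f$ is an isomorphism.

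I do not expect any substantial obstacle: the sole points requiring care are the bookkeeping of checking that each chosen square commutes and of applying the orthogonality hypothesis with $e$ and $m$ placed on the correct (left and right) sides. I note that the conclusions use only the \emph{existence} of the diagonal filler, not its uniqueness, so the arguments go through already for the weaker diagonal-fill-in property.
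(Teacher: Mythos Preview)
Your proposal is correct and is precisely the straightforward verification one would expect; the paper itself omits the proof entirely, remarking only that ``the proofs are straightforward.'' One minor point: your citation of \bref{rem:factn_dualization} for the duality in part 2 is slightly off, since that remark concerns factorization systems rather than orthogonality, but your direct argument for part 2 stands on its own regardless.
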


\section{Stability and cancellation for enriched prefactorization systems} \label{sec:stab_canc_enr_prefactn}

In the present section we establish several stability properties of the left and right classes of an enriched prefactorization system.  Many are proved on the basis of analogous properties of the class of \textit{cartesian arrows} of a functor.

\begin{ParSub}\label{par:cart_arr}
Given a functor $P:\A \rightarrow \Y$ and a morphism $f:B \rightarrow C$ in $\A$, recall that $f$ is \textit{$P$-cartesian} if for all morphisms $k:A \rightarrow C$ in $\A$ and $u:PA \rightarrow PB$ in $\Y$ with $Pf \cdot u = Pk$, there exists a unique $v:A \rightarrow B$ in $\A$ with $Pv = u$ and $f \cdot v = k$.  Also, we will say that an object $C \in \A$ is \textit{$P$-monic} if for all objects $D$ of $\A$ and morphisms $x,y:D \rightarrow C$, if $Px = Py$ then $x = y$.
\end{ParSub}

\begin{ExaSub} \label{exa:pb_cart}
Given a category $\Y$, the \textit{arrow category} $\Arr\Y = \CAT(\Two,\Y)$ has as objects all arrows of $\Y$ and as morphisms all commutative squares in $\Y$.  The cartesian morphisms of the codomain functor $\cod:\Arr\Y \rightarrow \Y$  are exactly the pullback squares in $\Y$.  The $\cod$-monic objects are the monomorphisms of $\Y$.
\end{ExaSub}

The following stability properties of cartesian arrows are well-known; the proofs, which are omitted, are easy and elementary exercises.  We also note in passing that cartesian arrows are part of a generalized prefactorization system of the sort considered in \cite{Th:FactConesFunc}\footnote{Indeed, see the first sentence of page 341 there, note that ``$G$-initial'' = ``$G$-cartesian'', and apply (6) of that same paper.}, which in general we might call a \textit{prefactorization system for cones along a functor}, and the given closure properties may alternatively be proved on that basis.

\begin{LemSub} \label{thm:cart_stab}
Let $P:\A \rightarrow \Y$ be a functor, and let $f:B \rightarrow C$, $g:C \rightarrow D$ in $\A$.
\begin{enumerate}
\item If $f$ and $g$ are $P$-cartesian, then $g \cdot f$ is $P$-cartesian.
\item If both $g \cdot f$ and $g$ are $P$-cartesian, then $f$ is $P$-cartesian.
\item If $C$ is $P$-monic and $g \cdot f$ is $P$-cartesian, then $f$ is $P$-cartesian.
\item If $f$ is a pullback of a $P$-cartesian morphism, and this pullback is preserved by $P$, then $f$ is $P$-cartesian.  
\item If $f$ is a fibre product in $\A$ of a family $(f_i:B_i \rightarrow C)_{i \in \I}$ of $P$-cartesian morphisms, indexed by a class $\I$, and this fibre product is preserved by $P$, then $f$ is $P$-cartesian.
\item Suppose $Pf$ is iso.  Then $f$ is $P$-cartesian iff $f$ is iso.
\end{enumerate}
\end{LemSub}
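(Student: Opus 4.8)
The plan is to prove all six statements directly from the universal property defining $P$-cartesianness, in each case reducing a lifting problem against $g \cdot f$ (or against $f$) to one or more lifting problems against the morphisms assumed $P$-cartesian, and then invoking the uniqueness clauses to glue the resulting lifts together. Throughout, the recurring pattern is the same: given a test morphism $k$ into the codomain and a morphism $u$ in $\Y$ lying over the appropriate composite, I transport $u$ along whatever structure maps are present (compositions with $Pg$, with a second pullback projection, or with the $P\pi_i$) to manufacture compatible lifting data for the cartesian hypotheses, solve those, and recombine. Existence of the desired lift is then formal, and uniqueness always follows by re-running the uniqueness clauses in reverse order.

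For 1--3 I would argue purely by chasing the definition. For 1, given $k:A \to D$ and $u$ over $Pk$, first lift $k$ through $g$ to get $w:A \to C$ over $Pf \cdot u$, then lift $w$ through $f$ to get $v$; existence is immediate and uniqueness follows from uniqueness for $g$ and then for $f$. For 2, I would lift $g \cdot k$ through $g \cdot f$ to obtain $v$ with $Pv = u$, and then apply the uniqueness clause of $g$ to the morphisms $f \cdot v$ and $k$, which agree after applying both $P$ (both lie over $Pk$) and $g$ (both compose to $g \cdot k$), concluding $f \cdot v = k$. For 3 the same initial lift through $g \cdot f$ produces $v$ with $Pv = u$, and then $P$-monicity of $C$ forces $f \cdot v = k$ at once, since $P(f \cdot v) = Pf \cdot u = Pk$.

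For 4 and 5 the extra ingredient is that $P$ preserves the relevant (wide) pullback, so that its image is again a limit cone. Writing the given pullback of the $P$-cartesian morphism $f'$ along some $c$ as a square with projections $f:B \to C$ and $h:B \to B'$, in 4 I would form the lifting datum $(c \cdot k,\, Ph \cdot u)$ for $f'$, solve it to get $w:A \to B'$ with $f' \cdot w = c \cdot k$, and then induce $v:A \to B$ from the pullback property in $\A$ applied to the cone $(k,w)$. That $Pv = u$ is forced by uniqueness in the image pullback $PB$, since $Pv$ and $u$ have equal composites with both projections $Pf$ and $Ph$. Statement 5 is the same argument with the single projection replaced by the family $(\pi_i)_{i \in \I}$: lift $k$ through each cartesian $f_i$ to a cone $(w_i)_{i \in \I}$, induce $v$ into the wide fibre product, and read off $Pv = u$ from the fact that $Pv$ and $u$ induce the same cone over the preserved limit.

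Statement 6 is the one requiring a little ingenuity. The direction ``iso $\Rightarrow$ cartesian'' is immediate: set $v = f^{-1} \cdot k$. For ``cartesian $\Rightarrow$ iso'' when $Pf$ is iso I would run the cartesian property twice. First, the lifting problem $(k,u) = (1_C,\,(Pf)^{-1})$ yields $v$ with $f \cdot v = 1_C$ and $Pv = (Pf)^{-1}$, so $v$ is a section. Second, the lifting problem $(k,u) = (f,\,1_{PB})$ against $f$ admits the two solutions $1_B$ and $v \cdot f$ (both lie over $1_{PB}$ and both compose with $f$ to give $f$), so the uniqueness clause yields $v \cdot f = 1_B$, making $v$ a two-sided inverse. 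I expect this second application---recognizing that $v \cdot f$ and $1_B$ solve one and the same lifting problem---to be the only genuinely non-mechanical step; everything else is bookkeeping with the universal property and its uniqueness clause.
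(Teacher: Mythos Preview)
Your argument is correct in every part; each lifting construction and uniqueness verification goes through exactly as you describe. The paper, however, omits the proof entirely, remarking only that the stability properties are well-known and that ``the proofs, which are omitted, are easy and elementary exercises'' (while also noting in passing that they could alternatively be deduced from the fact that cartesian arrows form one class of a generalized prefactorization system for cones along a functor, in the sense of Tholen). So there is nothing to compare against beyond the observation that your direct verification is precisely the elementary exercise the paper has in mind; your write-up simply supplies the details the paper declines to record.
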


The following stability properties generalize to the enriched context similar properties of ordinary prefactorization systems given in \cite{FrKe} 2.1.1.

\begin{PropSub}\label{thm:pref_stab}
Let $(\E,\M) = (\sH^{\downarrow_\V\uparrow_\V},\sH^{\downarrow_\V})$ be an arbitrary $\V$-prefactorization-system on a $\V$-category $\B$, and let $f:B \rightarrow C$, $g:C \rightarrow D$ be morphisms in $\B$.
\begin{enumerate}
\item If $f, g \in \M$, then $g \cdot f \in \M$.
\item If $g \cdot f \in \M$ and $g \in \M$, then $f \in \M$.
\item Suppose $\sH \subs \Epi_\V\B$.  Then if $g \cdot f \in \M$, it follows that $f \in \M$.
\item If $f$ is a $\V$-pullback in $\B$ of a morphism $m \in \M$, then $f \in \M$.
\item If $f$ is a $\V$-fibre-product in $\B$ of morphisms $f_i:B_i \rightarrow C$ that lie in $\M$ (for $i$ in some class $\I$), then $f \in \M$.
\item Every isomorphism in $\B$ lies in $\M$, and dually, in $\E$.  Hence $\E \cap \M = \Iso\B$ by \bref{thm:orth_yielding_retr_and_iso} \textnormal{3}.
\end{enumerate}
\end{PropSub}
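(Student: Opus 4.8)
The plan is to reduce all six items to the correspondingly numbered stability properties of cartesian arrows in \bref{thm:cart_stab}, through a single translation. For a fixed morphism $e:A_1 \rightarrow A_2$ of $\B$, the hom-functors assemble into an ordinary functor $F_e := \B(e,-):\B \rightarrow \Arr\V$ carrying each object $B$ to the arrow $\B(e,B):\B(A_2,B) \rightarrow \B(A_1,B)$ and each morphism $m:B_1 \rightarrow B_2$ to the commutative square \eqref{eqn:orth_pb}, viewed as a morphism of $\Arr\V$. By \bref{exa:pb_cart} the $\cod$-cartesian morphisms of $\Arr\V$ are precisely the pullback squares, so directly from \bref{def:enr_orth_fact} we have $e \downarrow_\V m$ if and only if $F_e(m)$ is $\cod$-cartesian; hence $m \in \M = \sH^{\downarrow_\V}$ if and only if $F_e(m)$ is $\cod$-cartesian for every $e \in \sH$. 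I would also note that $\cod \circ F_e = \B(A_1,-)$.

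Before treating the items I would record one preservation fact. Since $\Arr\V = \CAT(\Two,\V)$ is a functor category, its limits are computed pointwise at the two objects of $\Two$; thus $F_e$ preserves any $\V$-limit in $\B$ (such a limit being preserved by both evaluation functors $\B(A_2,-)$ and $\B(A_1,-)$), and, because $\cod \circ F_e = \B(A_1,-)$ again preserves $\V$-limits, $\cod$ sends the $F_e$-image of a $\V$-limit to a limit in $\V$. This supplies exactly the ``preserved by $P$'' hypotheses in parts 4 and 5 of \bref{thm:cart_stab}.

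With this dictionary, each item follows by fixing $e \in \sH$, applying the correspondingly numbered part of \bref{thm:cart_stab} with $P = \cod$, and then quantifying over $e \in \sH$. Items 1 and 2 use functoriality, $F_e(g \cdot f) = F_e(g) \cdot F_e(f)$, together with parts 1 and 2. Item 3 uses part 3: the assumption $\sH \subs \Epi_\V\B$ makes $F_e(C) = \B(e,C)$ a $\cod$-monic object, since $e$ being a $\V$-epi means precisely that each $\B(e,A)$ is a monomorphism in $\V$ and the $\cod$-monic objects are the monos of $\V$ by \bref{exa:pb_cart}. Items 4 and 5 use the preservation fact, which exhibits $F_e(f)$ as a $\cod$-preserved pullback of the $\cod$-cartesian $F_e(m)$, respectively as a $\cod$-preserved fibre product of the $\cod$-cartesian $F_e(f_i)$, so parts 4 and 5 apply. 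For item 6, an isomorphism $f$ of $\B$ has $F_e(f)$ iso and hence $\cod$-cartesian by part 6, giving $f \in \M$; that isomorphisms lie in $\E$ as well follows by applying this to the dual prefactorization-system $(\M,\E)$ on $\B^\op$ \pbref{rem:factn_dualization}, and then $\E \cap \M = \Iso\B$ because any $f \in \E \cap \M$ satisfies $f \downarrow_\V f$, hence $f \downarrow f$ in the ordinary sense, whence $f$ is an isomorphism by \bref{thm:orth_yielding_retr_and_iso}~3.

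The proof is essentially mechanical once the translation is in place, so the step I would watch most carefully is that translation itself: getting the variances right so that $e \downarrow_\V m$ corresponds to $\cod$-cartesianness of $F_e(m)$ and not of a dual, and matching the $\cod$-monic hypothesis of part 3 against the definition of $\V$-epi, which is phrased through $\B^\op$. After that dictionary and the pointwise computation of limits in $\Arr\V$ are fixed, all six items are immediate transcriptions of \bref{thm:cart_stab}.
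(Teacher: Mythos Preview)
Your proposal is correct and follows essentially the same approach as the paper: both introduce the functor $\B(e,-):\B \rightarrow \Arr\V$, identify $\V$-orthogonality with $\cod$-cartesianness via \bref{exa:pb_cart}, observe that this functor sends $\V$-limits to $\cod$-preserved pointwise limits, and then read off each item from the corresponding part of \bref{thm:cart_stab}. The only cosmetic difference is that the paper first reduces to a singleton $\sH = \{e\}$ (using $\M = \bigcap_{e \in \sH}\{e\}^{\downarrow_\V}$) whereas you carry the quantification over $e \in \sH$ through to the end; the content is identical.
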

\begin{proof}
Since $\M = \sH^{\downarrow_\V}$ is the intersection of the classes $\{e\}^{\downarrow_\V}$ with $e \in \sH$, we immediately reduce to the case where $\sH$ is a singleton $\{e\}$, so that $\M = \{e\}^{\downarrow_\V}$ for some morphism $e:A_1 \rightarrow A_2$ in $\B$.  There is an ordinary functor 
$$e^*:\B \rightarrow \Arr\V$$
into the category of arrows $\Arr\V$ of $\V$ \pbref{exa:pb_cart}, sending each object $B \in \B$ to the object $e^*(B) := \B(e,B) : \B(A_2,B) \rightarrow \B(A_1,B)$ of $\Arr\V$, and sending each morphism \mbox{$m:B_1 \rightarrow B_2$} in $\B$ to the morphism $e^*(m):e^*(B_1) \rightarrow e^*(B_2)$ defined as the commutative square \eqref{eqn:orth_pb} in $\V$.  Observe that
\begin{enumerate}
\item[(a)] $\M = \{e\}^{\downarrow_\V}$ consists of those morphisms $m$ in $\B$ for which $e^*(m)$ is a cartesian morphism with respect to $\cod:\Arr\V \rightarrow \V$.
\item[(b)] $e^*$ sends $\V$-limits in $\B$ to limits in $\Arr\V$ that are preserved by $\cod:\Arr\V \rightarrow \V$.
\end{enumerate}
Indeed, (a) is immediate from the definitions and \bref{exa:pb_cart}; regarding (b), observe that the composites
$$\B \xrightarrow{e^*} \Arr\V \xrightarrow{\dom} \V,\;\;\;\B \xrightarrow{e^*} \Arr\V \xrightarrow{\cod} \V$$
are $\B(A_2,-)$ and $\B(A_1,-)$, respectively, each of which sends $\V$-limits to limits, so that $e^*$ sends $\V$-limits to pointwise limits in the functor category $\Arr\V = \CAT(\Two,\V)$.

Using (a) and (b), the needed closure properties of $\M$ follow immediately from the corresponding closure properties of cartesian morphisms given in \bref{thm:cart_stab}.  For example, to prove 4, we reason that if $f$ is a $\V$-pullback of $m \in \M$, then by (a) and (b), $e^*(f)$ is a pullback of the $\cod$-cartesian morphism $e^*(m)$ and this pullback is preserved by $\cod$, so that by \bref{thm:cart_stab} 4, $e^*(f)$ is $\cod$-cartesian, i.e. $f \in \M$.  For 3, note that if $e \in \Epi_\V\B$, then $e^*(C) = \B(e,C)$ is mono in $\V$ and hence a $\cod$-monic object (\bref{par:cart_arr},\bref{exa:pb_cart}), so that we can apply \bref{thm:cart_stab} 3.
\end{proof}

When a $\V$-natural family of $\M$-morphisms induces a morphism between $\V$-enriched weighted limits, the resulting morphism again lies in $\M$, as we now show.  The notion of $\V$-enriched weighted limit was called \textit{indexed limit} in \cite{Ke:Ba}.

\begin{PropSub}\label{thm:ind_morph_wlimits_lies_in_m}
Let $(\E,\M)$ be a $\V$-prefactorization-system on a $\V$-category $\B$, let $B,B':\J \rightarrow \B$ be $\V$-functors, and let $m:B \rightarrow B'$ be a $\V$-natural transformation whose component morphisms $m_j:Bj \rightarrow B'j$ $(j \in \J)$ lie in $\M$.  Suppose that \mbox{$W:\J \rightarrow \uV$} is a $\V$-functor for which $\V$-enriched weighted limits $[W,B]$, $[W,B']$ exist in $\B$.  Then the induced morphism $[W,m]:[W,B] \rightarrow [W,B']$ lies in $\M$.
\end{PropSub}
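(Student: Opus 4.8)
The plan is to verify the defining orthogonality condition directly, reducing to a single left-class morphism and then exploiting that representable $\V$-functors preserve weighted limits. Since $\M = \E^{\downarrow_\V}$, to prove $[W,m] \in \M$ it suffices to fix a morphism $e:A_1 \rightarrow A_2$ in $\E$ and show that $e \downarrow_\V [W,m]$; that is, I must show that the square \eqref{eqn:orth_pb}, taken with $[W,m]$ in place of $m$, is a pullback in $\V$.

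First I would rewrite the four corners of that square as weighted limits in $\V$. By the standard fact that each representable $\V$-functor $\B(A,-):\B \rightarrow \uV$ preserves weighted limits (\cite{Ke:Ba}), there are $\V$-natural isomorphisms $\B(A,[W,B]) \cong \int_{j \in \J} \uV(Wj,\B(A,Bj))$ and likewise for $B'$, where the right-hand ends exist precisely because $[W,B]$ and $[W,B']$ exist in $\B$ (they are represented by the left-hand sides). Under these isomorphisms, which are natural in $A$ and compatible with the maps induced by $m$ and by $e$, the square \eqref{eqn:orth_pb} for $[W,m]$ and $e$ is identified with the image, under the assignment $F \mapsto \int_{j \in \J}\uV(Wj,Fj)$, of the diagram of squares $j \mapsto S_j$, where $S_j$ denotes the square \eqref{eqn:orth_pb} for $m_j$ and $e$.

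Next I would observe that each $S_j$ is a pullback: since $m_j \in \M = \E^{\downarrow_\V}$ and $e \in \E$, we have $e \downarrow_\V m_j$ by definition of $\V$-orthogonality. Viewing $j \mapsto S_j$ as a single commutative square in the functor category of $\V$-valued diagrams on $\J$, pointwise-ness of limits there makes it a pullback square at the level of diagrams. It then remains to apply a pullback-preserving functor: the assignment $F \mapsto \int_{j\in\J}\uV(Wj,Fj)$ preserves pullbacks, because an end is a limit, limits commute with limits, and each internal-hom functor $\uV(Wj,-):\V \rightarrow \V$, being a right adjoint, preserves limits. Hence its image square, namely \eqref{eqn:orth_pb} for $[W,m]$ and $e$, is again a pullback, so $e \downarrow_\V [W,m]$; as $e \in \E$ was arbitrary, $[W,m] \in \E^{\downarrow_\V} = \M$.

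The main obstacle I anticipate is bookkeeping rather than conceptual: one must check that the isomorphisms $\B(A,[W,-]) \cong \int_{j}\uV(Wj,\B(A,(-)j))$ are natural not merely in $A$ but jointly in the diagram variable, so that the square produced by $[W,m]$ really is the image of $j \mapsto S_j$ under $\int_{j}\uV(Wj,-)$, and not merely isomorphic to it corner by corner. This compatibility is exactly the $\V$-naturality built into the universal property of the weighted limit, so once it is made explicit the pullback-preservation step closes the argument. An alternative packaging, paralleling the proof of \bref{thm:pref_stab}, would route everything through the functor $e^*:\B \rightarrow \Arr\V$ together with a weighted-limit analogue of \bref{thm:cart_stab}, but the end-theoretic formulation above is preferable since it relies only on the continuity of $\int_{j}\uV(Wj,-)$ and on facts already recorded here.
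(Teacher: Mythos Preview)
Your proof is correct and follows essentially the same route as the paper's: both reduce to a single $e \in \E$, identify $\B(A,[W,B])$ with $[\J,\uV](W,\B(A,B-))$ (your end $\int_{j}\uV(Wj,\B(A,Bj))$ is exactly this hom-object), observe that the square of $\V$-functors $\J \rightarrow \uV$ is a pointwise pullback because each $e \downarrow_\V m_j$, and conclude that applying $[\J,\uV](W,-)$ yields a pullback. The paper states the last step without elaboration, whereas you spell out the limit-preservation reasoning and the naturality bookkeeping; these are welcome details but not a different argument.
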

\begin{proof}
Letting $e:A \rightarrow A'$ lie in $\E$, we intend to show that $e \downarrow_\V [W,m]$, i.e. that the diagram
$$
\xymatrix{
\B(A',[W,B]) \ar[d]_{\B(e,[W,B])} \ar[rr]^{\B(A',[W,m])} & & \B(A',[W,B']) \ar[d]^{\B(e,[W,B'])} \\
\B(A,[W,B]) \ar[rr]_{\B(A,[W,m])} & & \B(A,[W,B'])
}
$$
is a pullback in $\V$.  But this diagram is isomorphic to the following diagram
$$
\xymatrix{
[\J,\uV](W,\B(A',B-)) \ar[d]_{[\J,\uV](W,\B(e,B-))} \ar[rrr]^{[\J,\uV](W,\B(A',m_{-}))} & & & [\J,\uV](W,\B(A',B'-)) \ar[d]^{[\J,\uV](W,\B(e,B'-))} \\
[\J,\uV](W,\B(A,B-)) \ar[rrr]_{[\J,\uV](W,\B(A,m_{-}))} & & & [\J,\uV](W,\B(A,B'-)) 
}
$$
(and in particular, the given hom-objects exist in $\V$ and can be taken as just the hom-objects in the preceding diagram).  For each $j \in \J$, we have that $e \downarrow_\V m_j$, so the diagram
$$
\xymatrix{
\B(A',B-) \ar[d]_{\B(e,B-)} \ar[rr]^{\B(A',m_{-})} & & \B(A',B'-) \ar[d]^{\B(e,B'-)} \\
\B(A,B-) \ar[rr]_{\B(A,m_{-})} & & \B(A,B'-) 
}
$$
is a pointwise pullback of $\V$-functors $\J \rightarrow \uV$, and it follows that the preceding diagram is a pullback.
\end{proof}

\begin{CorSub} \label{prop:leftclass_closed_under_tensors}
Let $(\E,\M)$ be a $\V$-prefactorization-system on a $\V$-category $\B$.  Then $\M$ is closed under cotensors in $\B$ \pbref{def:closed_under_tensors}.  Dually, $\E$ is closed under tensors in $\B$.
\end{CorSub}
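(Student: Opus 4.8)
The plan is to recognize a cotensor as a special case of a $\V$-enriched weighted limit and then invoke \bref{thm:ind_morph_wlimits_lies_in_m} directly. Let $\ONEONE$ denote the unit $\V$-category, with a single object $*$ and hom-object $\ONEONE(*,*)$ equal to the unit object of $\V$. A $\V$-functor $\ONEONE \rightarrow \B$ is the same as an object of $\B$, and a $\V$-functor $W:\ONEONE \rightarrow \uV$ is the same as an object $V$ of $\V$; moreover the $\V$-functor category $[\ONEONE,\uV]$ is $\V$-isomorphic to $\uV$ via evaluation at $*$. Under these identifications, for an object $B$ of $\B$ viewed as a $\V$-functor $B:\ONEONE \rightarrow \B$ and an object $V$ of $\V$ viewed as a weight $W:\ONEONE \rightarrow \uV$, the weighted limit $[W,B]$ is characterized by
$$\B(A,[W,B]) \;\cong\; [\ONEONE,\uV](W,\B(A,B-)) \;\cong\; \uV(V,\B(A,B))$$
$\V$-naturally in $A$, which is precisely the defining property of the cotensor $[V,B]$. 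Hence cotensors in $\B$ are exactly the weighted limits indexed by $\ONEONE$, and $[V,B]$ exists if and only if the corresponding weighted limit does.

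Next I would observe that, under the identification of $\V$-functors $\ONEONE \rightarrow \B$ with objects of $\B$, a $\V$-natural transformation $m:B \rightarrow B'$ between two such $\V$-functors amounts to a single morphism $m:B \rightarrow B'$ in $\B$—the $\V$-naturality axiom over $\ONEONE$ being automatically satisfied—and its unique component $m_*$ is $m$ itself. Thus, given $m \in \M$ and an object $V$ of $\V$ for which the cotensors $[V,B]$ and $[V,B']$ both exist, we are exactly in the situation of \bref{thm:ind_morph_wlimits_lies_in_m} with $\J = \ONEONE$, with weight $W$ corresponding to $V$, and with the single component morphism $m \in \M$. That proposition then yields $[V,m] = [W,m] \in \M$, which establishes that $\M$ is closed under cotensors in $\B$ \pbref{def:closed_under_tensors}.

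Finally, the dual statement—that $\E$ is closed under tensors—follows by applying the result just proved to the $\V$-prefactorization-system $(\M,\E)$ on $\B^\op$ furnished by \bref{rem:factn_dualization}: there $\E$ is the right class, hence closed under cotensors in $\B^\op$, and cotensors in $\B^\op$ are precisely tensors in $\B$, so $\E$ is closed under tensors in $\B$. I do not anticipate any serious obstacle; the only point requiring care is the bookkeeping that identifies a cotensor with a weighted limit over $\ONEONE$ and correctly matches the weight, the diagram, and the single component of the $\V$-natural transformation, after which \bref{thm:ind_morph_wlimits_lies_in_m} does all the work.
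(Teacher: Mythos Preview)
Your proposal is correct and follows exactly the route the paper intends: the corollary is stated without proof immediately after \bref{thm:ind_morph_wlimits_lies_in_m}, and your argument---identifying a cotensor as a weighted limit over the unit $\V$-category $\ONEONE$ and then invoking \bref{thm:ind_morph_wlimits_lies_in_m} with $\J = \ONEONE$---is precisely the intended deduction, with the dual handled via \bref{rem:factn_dualization} as you do.
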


\section{Characterizations of enriched factorization systems}\label{sec:charns}

\begin{PropSub}\label{thm:enr_pref_as_v_orth_ord_pref}
An ordinary prefactorization system $(\E,\M)$ on a $\V$-category $\B$ is a $\V$-prefactorization-system on $\B$ as soon as each $e \in \E$ is $\V$-orthogonal to each $m \in \M$.   
\end{PropSub}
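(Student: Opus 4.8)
The plan is to verify directly the two defining equalities $\E^{\downarrow_\V} = \M$ and $\M^{\uparrow_\V} = \E$, in each case by a pair of opposite inclusions. Throughout I would use the two ingredients at hand: the hypothesis that $(\E,\M)$ is an \emph{ordinary} prefactorization system, which supplies the ordinary equalities $\E^{\downarrow} = \M$ and $\M^{\uparrow} = \E$; and the observation recorded after Definition \bref{def:enr_orth_fact} that $\V$-orthogonality implies ordinary orthogonality.

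First I would establish $\E^{\downarrow_\V} = \M$. The inclusion $\M \subs \E^{\downarrow_\V}$ is nothing more than a restatement of the hypothesis: by definition $m \in \E^{\downarrow_\V}$ means $e \downarrow_\V m$ for every $e \in \E$, and this is assumed to hold for every $m \in \M$. For the reverse inclusion $\E^{\downarrow_\V} \subs \M$, I would take $m \in \E^{\downarrow_\V}$, so that $e \downarrow_\V m$ for all $e \in \E$; since $\V$-orthogonality implies ordinary orthogonality, $e \downarrow m$ for all $e \in \E$, whence $m \in \E^{\downarrow} = \M$.

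The remaining equality $\M^{\uparrow_\V} = \E$ I would prove by the same pattern. The inclusion $\E \subs \M^{\uparrow_\V}$ again merely restates the hypothesis, while for $\M^{\uparrow_\V} \subs \E$ one takes $e \in \M^{\uparrow_\V}$, passes from $\V$-orthogonality to ordinary orthogonality to obtain $e \downarrow m$ for every $m \in \M$, and concludes $e \in \M^{\uparrow} = \E$. I do not expect any genuine obstacle here: the whole argument is a definition-chase in which the hypothesis yields the two inclusions $\M \subs \E^{\downarrow_\V}$ and $\E \subs \M^{\uparrow_\V}$ for free, and the passage from $\V$-orthogonality to ordinary orthogonality reduces the two reverse inclusions to the ordinary prefactorization axioms. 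The only point demanding care is to keep $\downarrow_\V$ and $\downarrow$ distinct and to invoke $\E^{\downarrow} = \M$ and $\M^{\uparrow} = \E$ at exactly the right moments.
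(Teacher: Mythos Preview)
Your proposal is correct and is essentially the same argument as the paper's, just unpacked: the paper compresses your four inclusions into the two chains $\E \subs \M^{\uparrow_\V} \subs \M^\uparrow = \E$ and $\M \subs \E^{\downarrow_\V}\subs \E^\downarrow = \M$, invoking exactly the same two ingredients (the $\V$-orthogonality hypothesis and the implication $\V$-orthogonal $\Rightarrow$ orthogonal).
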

\begin{proof}
Using our hypothesis of $\V$-orthogonality, we find that
\begin{equation}\E \subs \M^{\uparrow_\V} \subs \M^\uparrow\;,\;\;\;\;\M \subs \E^{\downarrow_\V}\subs \E^\downarrow\;.\end{equation}
But since $(\E,\M)$ is an ordinary prefactorization system, we have that $\E = \M^\uparrow$ and $\M = \E^\downarrow$, so the above inclusions are equalities.
\end{proof}

The following is an enriched analogue of a well-known characterization of ordinary factorization systems that is often used as the definition; see, e.g., \cite{FrKe} 2.2.1, \cite{AHS} 14.6.

\begin{PropSub} \label{thm:crit_factn_sys}
Let $\E$, $\M$ be classes of morphisms in a $\V$-category $\B$.  Then $(\E,\M)$ is a $\V$-factorization-system on $\B$ if and only if the following conditions hold:
\begin{enumerate}
\item Each of $\E$ and $\M$ is closed under composition with isomorphisms.
\item Each $e \in \E$ is $\V$-orthogonal to each $m \in \M$.
\item $(\E,\M)$-factorizations exist.
\end{enumerate}
\end{PropSub}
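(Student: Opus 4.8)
The plan is to prove the two implications separately, with the forward direction following quickly from the stability results already in hand and the reverse direction carrying the real content.

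For the forward direction, suppose $(\E,\M)$ is a $\V$-factorization-system. Condition 3 holds by definition, and since $\M = \E^{\downarrow_\V}$, every $m \in \M$ satisfies $e \downarrow_\V m$ for all $e \in \E$, which is exactly condition 2. For condition 1 I would write $(\E,\M)$ in the form $(\E^{\downarrow_\V\uparrow_\V},\E^{\downarrow_\V})$ (i.e. take $\sH = \E$) so that \bref{thm:pref_stab} applies: part 6 places every isomorphism in both $\E$ and $\M$, while part 1 gives closure of $\M$ under composition, and together these yield closure of $\M$ under composition with isomorphisms; the dual assertion for $\E$ follows via \bref{rem:factn_dualization}.

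The reverse direction is where the work lies: assuming conditions 1--3, I must deduce $\E^{\downarrow_\V} = \M$ and $\M^{\uparrow_\V} = \E$. Condition 2 immediately furnishes the inclusions $\M \subs \E^{\downarrow_\V}$ and $\E \subs \M^{\uparrow_\V}$, so only the reverse inclusions remain; I would treat $\E^{\downarrow_\V} \subs \M$ and obtain $\M^{\uparrow_\V} \subs \E$ dually through \bref{rem:factn_dualization}. Given $m' \in \E^{\downarrow_\V}$, I use condition 3 to factor $m' = m \cdot e$ with $e \in \E$ and $m \in \M$, and the aim is to show $e$ is an isomorphism, whence $m' \in \M$ by condition 1. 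The key simplification is that only the ordinary diagonal-lifting property is needed here, and this is implied by $\V$-orthogonality. Concretely: from $e \downarrow m'$ (valid since $m' \in \E^{\downarrow_\V}$), applying the lifting property to the square with top $1$ and bottom $m$ produces a $w$ with $w \cdot e = 1$ and $m' \cdot w = m$; then from $e \downarrow m$ (condition 2), applied to the square whose top is $e$ and whose bottom is $m$, both $1$ and $e \cdot w$ are diagonal fillers, so uniqueness forces $e \cdot w = 1$, and hence $e$ is iso.

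The main obstacle, such as it is, is conceptual rather than technical: one must notice that the enriched pullback formulation of orthogonality plays no role in this uniqueness argument, since only the induced diagonal lifting is used. Thus the classical Freyd--Kelly argument transports verbatim once one invokes the implication ``$\V$-orthogonality implies ordinary orthogonality.'' Having established $\E^{\downarrow_\V} = \M$ and $\M^{\uparrow_\V} = \E$, the pair $(\E,\M)$ is a $\V$-prefactorization-system, and since $(\E,\M)$-factorizations exist by condition 3, it is a $\V$-factorization-system, completing the proof.
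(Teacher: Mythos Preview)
Your proof is correct. Both directions are handled properly, and in particular your direct argument for the reverse inclusion $\E^{\downarrow_\V} \subs \M$ (factor $m' = m \cdot e$, use $e \downarrow m'$ to get a retraction $w$ of $e$, then use uniqueness in $e \downarrow m$ to see that $e \cdot w = 1$) is the classical Freyd--Kelly argument and goes through as you describe, since only ordinary orthogonality is needed and $\V$-orthogonality implies it.

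The paper's proof packages the same idea differently: rather than carrying out the classical argument inline, it observes that conditions 1--3 imply their ordinary counterparts, cites the known ordinary-category result (e.g.\ \cite{AHS} 14.6) to conclude that $(\E,\M)$ is an ordinary factorization system, and then invokes Proposition~\bref{thm:enr_pref_as_v_orth_ord_pref} to upgrade this to a $\V$-prefactorization-system using condition~2. Your route is more self-contained and avoids the detour through the ordinary prefactorization system and the separate lemma~\bref{thm:enr_pref_as_v_orth_ord_pref}; the paper's route is more modular and makes explicit the intermediate fact (used again in Corollary~\bref{thm:vfactn_sys_is_factn_sys}) that every $\V$-factorization-system is in particular an ordinary one.
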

\begin{proof}
That every $\V$-factorization-system satisfies the given conditions is immediate from \ref{thm:pref_stab} and \ref{def:enr_orth_fact}.  It is well-known that the converse implication holds for ordinary categories (e.g. see \cite{AHS} 14.6).  Hence, in the enriched case, if the given conditions hold, then 2 entails in particular that each $e \in \E$ is orthogonal in the ordinary sense to each $m \in \M$, so the result for ordinary categories entails that $(\E,\M)$ is an ordinary factorization system, and by 2, 3, and \bref{thm:enr_pref_as_v_orth_ord_pref} it then follows that $(\E,\M)$ is a $\V$-factorization-system.
\end{proof}

\begin{CorSub} \label{thm:vfactn_sys_is_factn_sys}
A pair $(\E,\M)$ of classes of morphisms in a $\V$-category $\B$ is a $\V$-factorization-system on $\B$ if and only if $(\E,\M)$ is an ordinary factorization system on $\B$ and each $e \in \E$ is $\V$-orthogonal to each $m \in \M$.
\end{CorSub}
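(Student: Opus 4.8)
The plan is to read off both implications directly from the characterization in \bref{thm:crit_factn_sys}, using in addition its classical ordinary counterpart (the same three conditions, but with ordinary orthogonality in place of $\V$-orthogonality; see \cite{AHS} 14.6) together with the observation, recorded above, that $\V$-orthogonality implies ordinary orthogonality.

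For the forward implication, suppose $(\E,\M)$ is a $\V$-factorization-system. By \bref{thm:crit_factn_sys} the three conditions hold: each of $\E,\M$ is closed under composition with isomorphisms, each $e \in \E$ is $\V$-orthogonal to each $m \in \M$, and $(\E,\M)$-factorizations exist. Since $\V$-orthogonality entails ordinary orthogonality, conditions 1--3 of the ordinary criterion are satisfied, so $(\E,\M)$ is an ordinary factorization system; and the $\V$-orthogonality of each $e \in \E$ to each $m \in \M$ is precisely the remaining assertion.

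For the converse, assume $(\E,\M)$ is an ordinary factorization system and that each $e \in \E$ is $\V$-orthogonal to each $m \in \M$. Applying the ordinary criterion to the hypothesis that $(\E,\M)$ is an ordinary factorization system yields exactly conditions 1 (closure under composition with isomorphisms) and 3 (existence of $(\E,\M)$-factorizations) of \bref{thm:crit_factn_sys}, while the assumed $\V$-orthogonality supplies condition 2. Hence \bref{thm:crit_factn_sys} applies and $(\E,\M)$ is a $\V$-factorization-system.

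I do not expect a genuine obstacle here: the statement is essentially a repackaging of \bref{thm:crit_factn_sys}. The only point requiring care is to harvest conditions 1 and 3 from the ordinary factorization-system hypothesis (rather than from the $\V$-structure) and to note that the $\V$-orthogonality hypothesis is strictly stronger than---and hence subsumes---the ordinary orthogonality built into being an ordinary factorization system.
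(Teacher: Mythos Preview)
Your proof is correct and essentially the same as the paper's: both directions are read off from \bref{thm:crit_factn_sys} together with its ordinary analogue and the implication $\V$-orthogonality $\Rightarrow$ ordinary orthogonality. The only cosmetic difference is that for the converse the paper invokes \bref{thm:enr_pref_as_v_orth_ord_pref} directly (an ordinary prefactorization system with $\V$-orthogonality is a $\V$-prefactorization system, and factorizations are already given), whereas you route through \bref{thm:crit_factn_sys} again; since the proof of \bref{thm:crit_factn_sys} itself appeals to \bref{thm:enr_pref_as_v_orth_ord_pref}, the two arguments amount to the same thing.
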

\begin{proof}
That the latter condition entails the former follows from \ref{thm:enr_pref_as_v_orth_ord_pref}.  Conversely, if $(\E,\M)$ is a $\V$-factorization-system, then conditions 1, 2, and 3 of \ref{thm:crit_factn_sys} hold, so the corresponding non-enriched conditions hold with respect to the underlying ordinary category $\B$, and hence the non-enriched analogue of \ref{thm:crit_factn_sys} entails that $(\E,\M)$ is an ordinary factorization system.
\end{proof}

A statement to the effect of the following proposition appears in an entry on the collaborative web site \textit{nLab} \cite{Nlab:enr_factn_sys}; a special case of this observation was also employed earlier in \cite{Ke:Ba} \S 6.1:
\begin{PropSub} \label{thm:enr_orth_from_ord}
Let $\E$ be a class of morphisms in a $\V$-category $\B$.  Suppose that $\B$ is tensored, and suppose that $\E$ is closed under tensors in $\B$ \pbref{def:closed_under_tensors}.  Then $\E^{\downarrow_\V} = \E^{\downarrow}$.

Dually, if a class $\M$ of morphisms in a cotensored $\V$-category $\B$ is closed under cotensors, then $\M^{\uparrow_\V} = \M^\uparrow$.
\end{PropSub}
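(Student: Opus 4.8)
The plan is to establish the two inclusions $\E^{\downarrow_\V} \subs \E^{\downarrow}$ and $\E^{\downarrow} \subs \E^{\downarrow_\V}$ separately. The first holds with no hypotheses at all: since $\V$-orthogonality implies ordinary orthogonality, any $m \in \E^{\downarrow_\V}$ is ordinarily orthogonal to every $e \in \E$ and hence lies in $\E^{\downarrow}$. All of the content therefore sits in the reverse inclusion, so I would fix a morphism $m : B_1 \to B_2$ in $\E^{\downarrow}$ together with an arbitrary $e : A_1 \to A_2$ in $\E$, and aim to show that the square \eqref{eqn:orth_pb} is a pullback in $\V$; this gives $e \downarrow_\V m$, and as $e$ ranges over $\E$ it yields $m \in \E^{\downarrow_\V}$.

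The key step is to test the pullback property of \eqref{eqn:orth_pb} representably: a commutative square in $\V$ is a pullback if and only if each representable functor $\V(W,-) : \V \to \Set$ ($W \in \V$) carries it to a pullback in $\Set$. So for a fixed but arbitrary $W \in \V$ I would apply $\V(W,-)$ to \eqref{eqn:orth_pb} and rewrite the resulting $\Set$-valued square via the tensoring adjunction, which, since $\B$ is tensored, supplies a bijection between $\V(W,\B(A,B))$ and the set of morphisms $W \otimes A \to B$ in $\B$, natural in $A$ and $B$. Tracing this bijection through the vertices and legs of the square---naturality in $A$ converts the maps induced by $\B(e,-)$ into precomposition with $W \otimes e$, and naturality in $B$ leaves postcomposition with $m$ unchanged---identifies the image of \eqref{eqn:orth_pb} under $\V(W,-)$ with the ordinary orthogonality square \eqref{eq:orthog_pb} for the pair $(W \otimes e, m)$. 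Consequently this image is a pullback in $\Set$ exactly when $W \otimes e \downarrow m$. Now the hypotheses do their work: the tensor $W \otimes e$ exists because $\B$ is tensored, it lies in $\E$ because $\E$ is closed under tensors, and hence $W \otimes e \downarrow m$ because $m \in \E^{\downarrow}$. As $W$ was arbitrary, \eqref{eqn:orth_pb} is a pullback in $\V$, which completes the reverse inclusion and the proof of $\E^{\downarrow_\V} = \E^{\downarrow}$.

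For the dual statement I would simply transport the result along the passage to $\Op{\B}$: since $\V$ is symmetric monoidal, $\Op{\B}$ is again a $\V$-category, its tensors are the cotensors of $\B$, the operation $\M^{\uparrow_\V}$ (resp.\ $\M^{\uparrow}$) computed in $\B$ agrees with $\M^{\downarrow_\V}$ (resp.\ $\M^{\downarrow}$) computed in $\Op{\B}$, and closure of $\M$ under cotensors in $\B$ is closure under tensors in $\Op{\B}$; the first part applied in $\Op{\B}$ then gives $\M^{\uparrow_\V} = \M^{\uparrow}$. I expect the main obstacle to be the bookkeeping in the middle paragraph: one must check carefully that all four legs of the square transform correctly under the tensoring adjunction, so that the representable image of \eqref{eqn:orth_pb} is genuinely the orthogonality square for $(W \otimes e, m)$ and not some twisted variant. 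Once that identification is pinned down precisely, the closure-under-tensors hypothesis does exactly the job of returning $W \otimes e$ to $\E$, and everything else is formal.
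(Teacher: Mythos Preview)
Your argument is correct and follows essentially the same route as the paper's proof: reduce to the inclusion $\E^{\downarrow} \subs \E^{\downarrow_\V}$, test the square \eqref{eqn:orth_pb} representably via $\V(W,-)$, and use the tensoring isomorphism $\V(W,\B(A,B)) \cong \B(W \otimes A, B)$ to identify the resulting square with the ordinary orthogonality square for $(W \otimes e, m)$. The only cosmetic difference is that the paper works with the functors $\V(V,-):\V \to \SET$ valued in a category of classes (per \bref{par:cat_classes}), since $\V$ is not assumed locally small.
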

\begin{proof}
Let $\SET$ be a category of classes \pbref{par:cat_classes} in which lie the hom-classes of $\V$.  Enriched orthogonality implies ordinary, so it suffices to show that $\E^\downarrow \subs \E^{\downarrow_\V}$.  Letting $m:B_1 \rightarrow B_2$ lie in $\E^\downarrow$ and $e:A_1 \rightarrow A_2$ lie in $\E$, we must show that $e \downarrow_\V m$.  It suffices to show that each functor $\V(V,-):\V \rightarrow \SET$ ($V \in \V$) sends the square \eqref{eqn:orth_pb} to a pullback square in $\SET$.  Since $\B$ is tensored, we have in particular that
$$\V(V,\B(A,B)) \cong \B(V \otimes A,B)$$
naturally in $A,B \in \B$.  The diagram in $\SET$ obtained by applying $\V(V,-)$ to the square \eqref{eqn:orth_pb} is therefore isomorphic to the following diagram
$$
\xymatrix {
\B(V \otimes A_2,B_1) \ar[rr]^{\B(V \otimes A_2,m)} \ar[d]_{\B(V \otimes e,B_1)}  && \B(V \otimes A_2,B_2) \ar[d]^{\B(V \otimes e,B_2)} \\
\B(V \otimes A_1,B_1)  \ar[rr]^{\B(V \otimes A_1,m)}                     && {\A(V \otimes A_1,B_2)\;,}
}
$$
which is a pullback in $\SET$ since $V \otimes e \in \E$ and hence $V \otimes e \downarrow m$.
\end{proof}

\begin{PropSub}\label{thm:cotensored_tensored_ord_prefact_is_enr}
Let $(\E,\M)$ be an ordinary prefactorization system on a $\V$-category $\B$.
\begin{enumerate}
\item If $\B$ is tensored and $\E$ is closed under tensors in $\B$, then $(\E,\M)$ is a $\V$-prefactorization-system on $\B$.
\item Dually, if $\B$ is cotensored and $\M$ is closed under cotensors in $\B$, then $(\E,\M)$ is a $\V$-prefactorization-system on $\B$.
\end{enumerate}
\end{PropSub}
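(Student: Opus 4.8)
The plan is to obtain this as an immediate consequence of the two preceding results, \bref{thm:enr_orth_from_ord} and \bref{thm:enr_pref_as_v_orth_ord_pref}, so almost no new work is required. I would treat part 1 in detail and then deduce part 2 by dualization. For part 1, recall first that since $(\E,\M)$ is an ordinary prefactorization system on (the underlying ordinary category of) $\B$, we have $\E^\downarrow = \M$ and $\M^\uparrow = \E$.

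Next I would invoke \bref{thm:enr_orth_from_ord}: because $\B$ is tensored and $\E$ is closed under tensors in $\B$, that proposition gives $\E^{\downarrow_\V} = \E^\downarrow$, and combining this with the ordinary prefactorization equation $\E^\downarrow = \M$ yields $\E^{\downarrow_\V} = \M$. In particular this shows that $\M \subs \E^{\downarrow_\V}$, i.e. that each $e \in \E$ is $\V$-orthogonal to each $m \in \M$. This is exactly the hypothesis of \bref{thm:enr_pref_as_v_orth_ord_pref}, so applying that proposition to the ordinary prefactorization system $(\E,\M)$ lets me conclude that $(\E,\M)$ is in fact a $\V$-prefactorization-system on $\B$, establishing part 1.

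For part 2, I would argue by passing to $\B^\op$ rather than repeating the computation. An ordinary prefactorization system $(\E,\M)$ on $\B$ corresponds to the ordinary prefactorization system $(\M,\E)$ on $\B^\op$; the hypothesis that $\B$ is cotensored says that $\B^\op$ is tensored, and the hypothesis that $\M$ is closed under cotensors in $\B$ says that $\M$ is closed under tensors in $\B^\op$. Thus part 1 applies to $(\M,\E)$ on $\B^\op$ and shows it to be a $\V$-prefactorization-system there; by \bref{rem:factn_dualization} this is equivalent to $(\E,\M)$ being a $\V$-prefactorization-system on $\B$, as desired.

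There is no genuine obstacle here, since the content has been isolated into the earlier propositions; the only points demanding care are the bookkeeping needed to match the hypotheses of \bref{thm:enr_orth_from_ord} exactly (tensored plus closed under tensors, paired with the $\downarrow$-version of orthogonality) and the correct translation of tensors, cotensors, and the two one-sided orthogonality operators under passage to $\B^\op$ in the dual case.
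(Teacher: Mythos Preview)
Your proposal is correct and follows essentially the same approach as the paper: invoke \bref{thm:enr_orth_from_ord} to obtain $\E^{\downarrow_\V} = \E^\downarrow = \M$, which supplies the $\V$-orthogonality hypothesis of \bref{thm:enr_pref_as_v_orth_ord_pref}, and then treat part 2 by duality. The paper's version is merely more terse.
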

\begin{proof}
1. By \bref{thm:enr_pref_as_v_orth_ord_pref} it suffices to show that $\M \subs \E^{\downarrow_\V}$.  But by \ref{thm:enr_orth_from_ord} we have that $\M = \E^\downarrow = \E^{\downarrow_\V}$.
\end{proof}

\begin{PropSub}
Let $\E$, $\M$ be classes of morphisms in a $\V$-category $\B$, and suppose that $\B$ is tensored and cotensored.  Then the following are equivalent:
\begin{enumerate}
\item $(\E,\M)$ is a $\V$-prefactorization-system on $\B$;
\item $(\E,\M)$ is an ordinary prefactorization system on $\B$, $\E$ is closed under tensors, and $\M$ is closed under cotensors.
\end{enumerate}
\end{PropSub}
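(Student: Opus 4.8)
The plan is to prove both implications by assembling results already established in this section, so that very little direct computation is required. For the implication (2) $\Rightarrow$ (1), I would simply invoke \bref{thm:cotensored_tensored_ord_prefact_is_enr}: since $\B$ is tensored and $\E$ is closed under tensors, part 1 of that proposition immediately yields that the ordinary prefactorization system $(\E,\M)$ is in fact a $\V$-prefactorization-system on $\B$. Note that this direction uses only the tensored hypothesis together with the closure of $\E$ under tensors; the cotensor conditions are redundant here, though they are of course available by the symmetry of the hypotheses.

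For the converse (1) $\Rightarrow$ (2), suppose that $(\E,\M)$ is a $\V$-prefactorization-system on $\B$. The two closure properties in (2) are then immediate from \bref{prop:leftclass_closed_under_tensors}, which guarantees for \emph{any} $\V$-prefactorization-system that $\E$ is closed under tensors and $\M$ is closed under cotensors in $\B$. It therefore remains only to show that $(\E,\M)$ is an ordinary prefactorization system, i.e. that $\E^{\downarrow} = \M$ and $\M^{\uparrow} = \E$. Here I would apply \bref{thm:enr_orth_from_ord}: since $\B$ is tensored and $\E$ is (now known to be) closed under tensors, that proposition gives $\E^{\downarrow} = \E^{\downarrow_\V}$, and since $(\E,\M)$ is a $\V$-prefactorization-system we have $\E^{\downarrow_\V} = \M$, whence $\E^{\downarrow} = \M$. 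Dually, since $\B$ is cotensored and $\M$ is closed under cotensors, the dual half of \bref{thm:enr_orth_from_ord} gives $\M^{\uparrow} = \M^{\uparrow_\V} = \E$. Thus $(\E,\M)$ is an ordinary prefactorization system, completing this direction.

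Since both implications reduce to direct applications of previously established results, I do not anticipate any genuine obstacle. The only point requiring care is the order of invocation in the converse: one must \emph{first} extract the tensor and cotensor closure of $\E$ and $\M$ from \bref{prop:leftclass_closed_under_tensors}, because those closure conditions are precisely the hypotheses needed before \bref{thm:enr_orth_from_ord} can be brought to bear to identify the enriched and ordinary orthogonality operators.
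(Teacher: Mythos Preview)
Your proposal is correct and follows essentially the same route as the paper: both directions invoke \bref{thm:cotensored_tensored_ord_prefact_is_enr} for $(2)\Rightarrow(1)$, and for $(1)\Rightarrow(2)$ first extract the closure properties via \bref{prop:leftclass_closed_under_tensors} and then apply \bref{thm:enr_orth_from_ord} (and its dual) to identify $\E^{\downarrow_\V} = \E^\downarrow$ and $\M^{\uparrow_\V} = \M^\uparrow$. Your remark about the necessary order of invocation in the converse direction is exactly the point.
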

\begin{proof}
By \ref{thm:cotensored_tensored_ord_prefact_is_enr}, 2 entails 1.  Conversely, if 1 holds, then by \ref{prop:leftclass_closed_under_tensors} we have that $\E$ is closed under tensors and $\M$ is closed under cotensors, so by two applications of \bref{thm:enr_orth_from_ord} we find that $\M = \E^{\downarrow_\V} = \E^\downarrow$ and $\E = \M^{\uparrow_\V} = \M^\uparrow$; therefore $(\E,\M)$ is an ordinary prefactorization system.
\end{proof}

\begin{ThmSub}\label{thm:charn_factn_sys_tensors}
Let $\E$, $\M$ be classes of morphisms in a $\V$-category $\B$.  If $\B$ is tensored, then the following are equivalent:
\begin{enumerate}
\item $(\E,\M)$ is a $\V$-factorization-system on $\B$;
\item $(\E,\M)$ is an ordinary factorization system on $\B$, and $\E$ is closed under tensors in $\B$.
\end{enumerate}
If $\B$ is cotensored, then 1 is equivalent to
\begin{enumerate}
\item[2\textsuperscript{$\prime$}.] $(\E,\M)$ is an ordinary factorization system on $\B$, and $\M$ is closed under cotensors in $\B$.
\end{enumerate}
\end{ThmSub}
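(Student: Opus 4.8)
The plan is to assemble this theorem from the results already established, since all of the enriched-versus-ordinary content has been isolated in the preceding propositions. I would prove the two equivalences separately, but first observe that they are formally dual: applying the tensored case ($1 \Leftrightarrow 2$) to the pair $(\M,\E)$ on $\B^\op$ and translating back via \bref{rem:factn_dualization}---under which tensors on $\B^\op$ correspond to cotensors on $\B$, and $\E,\M$ exchange roles---yields precisely the cotensored case ($1 \Leftrightarrow 2'$). So I would concentrate on the tensored case and then deduce the cotensored case by this duality.

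For the forward implication $1 \Rightarrow 2$: suppose $(\E,\M)$ is a $\V$-factorization-system. Then by \bref{thm:vfactn_sys_is_factn_sys} it is in particular an ordinary factorization system on $\B$. Moreover, a $\V$-factorization-system is a $\V$-prefactorization-system, so \bref{prop:leftclass_closed_under_tensors} immediately gives that $\E$ is closed under tensors in $\B$. This establishes condition 2.

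For the converse $2 \Rightarrow 1$: suppose $(\E,\M)$ is an ordinary factorization system with $\E$ closed under tensors, and recall that an ordinary factorization system is in particular an ordinary prefactorization system. Since $\B$ is tensored, \bref{thm:cotensored_tensored_ord_prefact_is_enr}(1) then applies and shows that $(\E,\M)$ is a $\V$-prefactorization-system on $\B$. But $(\E,\M)$-factorizations exist---because $(\E,\M)$ is an ordinary factorization system, and the underlying classes $\E,\M$ are literally unchanged---so by \pbref{def:enr_orth_fact} the pair $(\E,\M)$ is a $\V$-factorization-system.

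There is no genuine obstacle here: the substantive work, namely relating $\V$-orthogonality to ordinary orthogonality in the presence of (co)tensors, was carried out in \bref{thm:enr_orth_from_ord} and packaged into \bref{thm:cotensored_tensored_ord_prefact_is_enr} and \bref{prop:leftclass_closed_under_tensors}. The only points requiring a moment's care are to note that the existence of factorizations transfers verbatim since the classes are the same, and to keep straight which half of each dual statement supplies which implication.
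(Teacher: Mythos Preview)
Your proof is correct and follows essentially the same approach as the paper: the forward direction via \bref{thm:vfactn_sys_is_factn_sys} and \bref{prop:leftclass_closed_under_tensors}, the converse via \bref{thm:cotensored_tensored_ord_prefact_is_enr}. You are slightly more explicit than the paper in two places---you spell out the duality reducing the cotensored case to the tensored one, and you note that after \bref{thm:cotensored_tensored_ord_prefact_is_enr} yields a $\V$-prefactorization-system one still needs the existence of factorizations (which is part of hypothesis~2) to conclude---but these are elaborations of the same argument rather than a different route.
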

\begin{proof}
If $(\E,\M)$ is $\V$-factorization-system, then $(\E,\M)$ is an ordinary factorization system by \ref{thm:vfactn_sys_is_factn_sys}, and $\E$ is closed under tensors by \ref{prop:leftclass_closed_under_tensors}.  Conversely, 2 entails 1 by \ref{thm:cotensored_tensored_ord_prefact_is_enr}.
\end{proof}

\section{Enriched strong monomorphisms and epimorphisms} \label{sec:enr_str_monos}

The notion of strong monomorphism was introduced in \cite{Ke:MonoEpiPb}, and the following enriched generalization of this notion was given in \cite{Day:AdjFactn}.

\begin{DefSub} \label{def:enr_str_mon}
Let $\B$ be a $\V$-category.
\begin{enumerate}
\item A \textit{$\V$-strong-mono(morphism)} in $\B$ is a $\V$-mono $m:B_1 \rightarrow B_2$ such that $e \downarrow_\V m$ for every $\V$-epi $e$ in $\B$.
\item We denote the class of all $\V$-strong-monos in $\B$ by 
$$\StrMono_\V\B := (\Epi_\V\B)^{\downarrow_\V} \cap \Mono_\V\B\;.$$
\item A \textit{$\V$-strong-epi(morphism)} in $\B$ is a $\V$-strong-mono in $\B^\op$, and the class of all such is denoted by $\StrEpi_\V\B$.
\end{enumerate}
\end{DefSub}

\begin{PropSub} \label{thm:stability_and_canc_for_strmonos}
For any $\V$-category $\B$, the following hold:
\begin{enumerate}
\item If $g \cdot f \in \StrMono_\V\B$, then $f \in \StrMono_\V\B$.
\item $\StrMono_\V\B$ is closed under composition, cotensors, arbitrary $\V$-fibre-products, and $\V$-pullbacks along arbitrary morphisms in $\B$.
\end{enumerate}
\end{PropSub}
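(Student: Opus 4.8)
The plan is to observe that $\StrMono_\V\B = \M \cap \Mono_\V\B$, where $\M := (\Epi_\V\B)^{\downarrow_\V}$, and then to derive each cancellation and closure property of the intersection from the corresponding property of each of the two component classes, both of which are already in hand. By \bref{def:enr_str_mon} the class $\StrMono_\V\B$ is by definition this intersection. Setting $\sH := \Epi_\V\B$, the pair $(\sH^{\downarrow_\V\uparrow_\V}, \M)$ is a $\V$-prefactorization-system whose right class is exactly $\M$, so every property of right classes established in \bref{thm:pref_stab} and \bref{prop:leftclass_closed_under_tensors} applies to $\M$. At the same time, $\Mono_\V\B$ enjoys the parallel battery of properties by \bref{thm:closure_props_of_monos}.

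For part 1, if $g \cdot f \in \StrMono_\V\B$ then $g \cdot f$ lies in both $\M$ and $\Mono_\V\B$; I would invoke \bref{thm:pref_stab} 3 to conclude $f \in \M$ and \bref{thm:closure_props_of_monos} 1 to conclude $f \in \Mono_\V\B$, whence $f \in \StrMono_\V\B$. For part 2, I would dispatch each closure property by verifying it for the two components separately and then intersecting: closure under composition from \bref{thm:pref_stab} 1 together with \bref{thm:closure_props_of_monos} 2; closure under cotensors from \bref{prop:leftclass_closed_under_tensors} together with \bref{thm:closure_props_of_monos} 2; closure under arbitrary $\V$-fibre-products from \bref{thm:pref_stab} 5 together with \bref{thm:closure_props_of_monos} 2; and closure under $\V$-pullbacks along arbitrary morphisms from \bref{thm:pref_stab} 4 together with \bref{thm:closure_props_of_monos} 2. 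In each instance, once the morphism in question is known to lie in each factor, its membership in $\M \cap \Mono_\V\B = \StrMono_\V\B$ is immediate.

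The one step deserving care is the left-cancellation in part 1, since for the right class of a general $\V$-prefactorization-system the cancellation $g \cdot f \in \M \Rightarrow f \in \M$ requires the additional hypothesis $g \in \M$ (\bref{thm:pref_stab} 2). The unconditional form needed here instead rests on \bref{thm:pref_stab} 3, whose hypothesis $\sH \subs \Epi_\V\B$ holds trivially because we have taken $\sH = \Epi_\V\B$. This is exactly the feature that makes $\StrMono_\V\B$ better behaved than the right class of an arbitrary $\V$-prefactorization-system, and it is the sole place where the specific choice of $\sH$ is genuinely used rather than merely convenient.
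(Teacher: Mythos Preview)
Your proposal is correct and follows essentially the same approach as the paper: express $\StrMono_\V\B$ as the intersection $(\Epi_\V\B)^{\downarrow_\V} \cap \Mono_\V\B$, then appeal to the stability and cancellation results of \bref{sec:stab_canc_enr_prefactn} for the first factor and to \bref{thm:closure_props_of_monos} for the second. Your explicit identification of \bref{thm:pref_stab}~3 (with $\sH = \Epi_\V\B$) as the ingredient enabling the unconditional left-cancellation is a welcome clarification that the paper's terse proof leaves implicit.
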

\begin{proof}
By \bref{sec:stab_canc_enr_prefactn} and \bref{thm:closure_props_of_monos}, each of the classes $(\Epi_\V\B)^{\downarrow_\V}$ and $\Mono_\V\B$ possesses the needed closure properties, so their intersection does as well.
\end{proof}

\begin{PropSub} \label{prop:reg_mono_str_mono}
Every $\V$-regular-mono in a $\V$-category $\B$ is a $\V$-strong-mono.  In particular, every section in $\B$ is a $\V$-strong-mono.
\end{PropSub}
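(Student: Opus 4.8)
The plan is to work directly from the definition of $\V$-strong-mono. Write the given $\V$-regular-mono $m:B_1 \to B_2$ as a $\V$-equalizer of some parallel pair $f,g:B_2 \to D$; by the definition of $\V$-regular-mono, $m$ is already a $\V$-mono, so $m \in \Mono_\V\B$. Thus it suffices to show $m \in (\Epi_\V\B)^{\downarrow_\V}$, i.e.\ that $e \downarrow_\V m$ for every $\V$-epi $e:A_1 \to A_2$, which by \eqref{eqn:orth_pb} means establishing that the square
$$\xymatrix{\B(A_2,B_1) \ar[r]^{\B(A_2,m)} \ar[d]_{\B(e,B_1)} & \B(A_2,B_2) \ar[d]^{\B(e,B_2)} \\ \B(A_1,B_1) \ar[r]_{\B(A_1,m)} & \B(A_1,B_2)}$$
is a pullback in $\V$. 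Once this is done, $m \in (\Epi_\V\B)^{\downarrow_\V} \cap \Mono_\V\B = \StrMono_\V\B$ \pbref{def:enr_str_mon}, and the final ``in particular'' assertion follows immediately since every section is a $\V$-regular-mono by \bref{prop:section_reg_mono}.

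First I would apply the ordinary functors $\B(A_2,-)$ and $\B(A_1,-)$ to the equalizer data. Because $m$ is a $\V$-equalizer, each of these functors preserves it, so $\B(A_2,m)$ is the equalizer of $\B(A_2,f),\B(A_2,g)$ and $\B(A_1,m)$ is the equalizer of $\B(A_1,f),\B(A_1,g)$ in $\V$. Together with the bifunctoriality squares of $\B(-,-)$ pairing $e$ against each of $f$, $g$, $m$, this assembles into a commutative ladder in $\V$ whose two rows are equalizer forks
$$\B(A_2,B_1) \xrightarrow{\B(A_2,m)} \B(A_2,B_2) \rightrightarrows \B(A_2,D), \qquad \B(A_1,B_1) \xrightarrow{\B(A_1,m)} \B(A_1,B_2) \rightrightarrows \B(A_1,D),$$
with vertical maps $\B(e,B_1)$, $\B(e,B_2)$, $\B(e,D)$, and whose left-hand square is precisely \eqref{eqn:orth_pb}.

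The decisive observation is that, since $e$ is a $\V$-epi, the map $\B(e,D)$ is a monomorphism in $\V$ \pbref{def:enr_mono_limit}. I would then invoke the elementary fact that in any commutative diagram of this shape---two equalizer rows with the rightmost vertical arrow monic---the left-hand square is a pullback. The verification is routine: a test cone into $\B(A_2,B_2)$ and $\B(A_1,B_1)$ agreeing over $\B(A_1,B_2)$ equalizes $\B(A_2,f),\B(A_2,g)$ after postcomposition with the monic $\B(e,D)$, hence equalizes them outright and so factors uniquely through $\B(A_2,m)$, the remaining compatibility and uniqueness following because equalizers are monic. This yields that \eqref{eqn:orth_pb} is a pullback, i.e.\ $e \downarrow_\V m$, as required.

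The main obstacle is exactly this last lemma: two equalizer rows do \emph{not} by themselves force the left square to be a pullback, and one genuinely needs the rightmost vertical $\B(e,D)$ to be monic---this is the \emph{sole} point at which the hypothesis that $e$ is a $\V$-epi is used, so it must be located carefully rather than absorbed into ``preservation of equalizers''. If one prefers to avoid asserting the lemma internally in $\V$, the same argument can instead be run after applying each representable $\V(V,-):\V \to \SET$, which preserves equalizers and monos and thereby reduces the claim to the corresponding elementary fact in $\SET$; a square in $\V$ is then a pullback because every $\V(V,-)$ sends it to one.
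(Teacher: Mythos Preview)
Your proposal is correct and follows essentially the same route as the paper: assemble the ladder of two equalizer rows connected by the vertical maps $\B(e,-)$, use that $e$ is a $\V$-epi to make the relevant vertical(s) monic, and then verify directly that the left square is a pullback, with the ``in particular'' clause handled via \bref{prop:section_reg_mono}. The paper's proof is terser (it simply notes that ``each vertical morphism is mono'' and leaves the pullback verification to the reader), whereas you give more detail and isolate that only the rightmost vertical $\B(e,D)$ being monic is actually needed; but the underlying argument is the same.
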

\begin{proof}
Let
$$
\xymatrix{
B_1 \ar[r]^m & B_2 \ar@/^.6ex/[r]^f \ar@/_.6ex/[r]_g & B \\
}
$$
be a $\V$-equalizer diagram in $\B$.  For each $\V$-epi $e:A_1 \rightarrow A_2$ in $\B$ we have a diagram
$$
\xymatrix{
\B(A_2,B_1) \ar[d]_{\B(e,B_1)} \ar[r]^{\B(A_2,m)} & \B(A_2,B_2) \ar[d]_{\B(e,B_2)} \ar@/^.6ex/[r]^{\B(A_2,f)} \ar@/_.6ex/[r]_{\B(A_2,g)} & \B(A_2,B) \ar[d]^{\B(e,B)} \\
\B(A_1,B_1) \ar[r]_{\B(A_1,m)} & \B(A_1,B_2) \ar@/^.6ex/[r]^{\B(A_1,f)} \ar@/_.6ex/[r]_{\B(A_1,g)} & \B(A_1,B)
}
$$
in which each row is an equalizer diagram in $\V$.  Using the fact that each vertical morphism is mono, one readily checks that the leftmost square is a pullback.  The second claim follows from the first via \ref{prop:section_reg_mono}.
\end{proof}

The following is an enriched generalization of part of \cite{FrKe}, 2.1.4.
\begin{PropSub} \label{prop:pref_mono_epi}
Let $(\E,\M)$ be a $\V$-prefactorization-system on a $\V$-category $\B$.
\begin{enumerate}
\item If $\B$ has $\V$-kernel-pairs and every retraction in $\B$ lies in $\E$, then $\M \subs \Mono_\V\B$.
\item Dually, if $\B$ has $\V$-cokernel-pairs and every section in $\B$ lies in $\M$, then $\E \subs \Epi_\V\B$.
\end{enumerate}
\end{PropSub}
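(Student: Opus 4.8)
The plan is to prove part 1 and to obtain part 2 by dualization via \bref{rem:factn_dualization}, using that $\V$-cokernel-pairs in $\B$ are $\V$-kernel-pairs in $\B^\op$ and that sections in $\B$ are retractions in $\B^\op$. For part 1, I would fix a morphism $m:B \rightarrow C$ in $\M$ and aim to verify the criterion of \bref{prop:enr_mono_kp}: that $m$ admits a $\V$-kernel-pair $\pi_1,\pi_2:P \rightarrow B$ with $\pi_1 = \pi_2$. Since $\B$ has $\V$-kernel-pairs by hypothesis, such a pair $(\pi_1,\pi_2)$ exists, and the entire task reduces to showing $\pi_1 = \pi_2$.

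First I would extract the diagonal of the kernel pair. Applying the universal property of the underlying ordinary pullback to the compatible pair $(1_B,1_B)$ (compatible since $m \cdot 1_B = m \cdot 1_B$) yields a morphism $\Delta:B \rightarrow P$ with $\pi_1 \Delta = \pi_2 \Delta = 1_B$. Consequently each of $\pi_1$ and $\pi_2$ is a retraction in $\B$, with common section $\Delta$, so by hypothesis $\pi_1, \pi_2 \in \E$.

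Next I would invoke orthogonality. Since $\pi_1 \in \E$ and $m \in \M = \E^{\downarrow_\V}$, we have $\pi_1 \downarrow_\V m$, and hence in particular ordinary orthogonality $\pi_1 \downarrow m$. The defining relation of the kernel pair gives $m \cdot \pi_2 = m \cdot \pi_1$, so the square with left edge $\pi_1$, top edge $\pi_2$, and both remaining edges $m$ commutes; the diagonal fill-in then furnishes a morphism $w:B \rightarrow B$ with $w \cdot \pi_1 = \pi_2$ and $m \cdot w = m$. To close the argument I would precompose $w \cdot \pi_1 = \pi_2$ with the section $\Delta$, obtaining $w = w \cdot \pi_1 \cdot \Delta = \pi_2 \cdot \Delta = 1_B$, whence $\pi_2 = w \cdot \pi_1 = \pi_1$. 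Thus $m$ has a $\V$-kernel-pair with equal projections and so is a $\V$-mono by \bref{prop:enr_mono_kp}.

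No single step is a serious obstacle; the only point requiring care is conceptual rather than computational. On the one hand, ordinary orthogonality already suffices for the diagonal fill-in, so the full strength of $\V$-orthogonality is not needed at that stage. On the other hand, the passage from the conclusion $\pi_1 = \pi_2$ to the assertion that $m$ is a $\V$-mono must proceed through the enriched criterion \bref{prop:enr_mono_kp}, rather than the ordinary characterization of monomorphisms, precisely because the pair in hand is a $\V$-kernel-pair. The remaining bookkeeping---that $\pi_1,\pi_2$ are genuine retractions and that the relevant square commutes---is routine.
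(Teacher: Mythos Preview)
Your proof is correct and follows essentially the same approach as the paper's: form the $\V$-kernel-pair, use the diagonal section to see that $\pi_1$ is a retraction hence in $\E$, apply (ordinary) orthogonality of $\pi_1$ against $m$ to obtain $w$ with $w\cdot\pi_1=\pi_2$, compute $w=1_B$ by precomposing with the section, and conclude via \bref{prop:enr_mono_kp}. The only cosmetic difference is notation ($\Delta$ versus $s$) and your explicit remark that $\pi_2\in\E$ as well, which is not actually used.
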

\begin{proof}
1.  Let $m:B \rightarrow C$ lie in $\M$.  We have a $\V$-pullback as in the following diagram
$$
\xymatrix{
B \ar@/^/@{=}[drr] \ar@/_/@{=}[ddr] \ar@{-->}[dr]|s &   &  \\
                                      & P \ar[d]|{\pi_1} \ar[r]|{\pi_2} \pullbackcorner & B \ar[d]^m \\
                                      & B \ar[r]_m \ar@{-->}[ur]_w                       & C
}
$$
and an induced $s$ such that $\pi_1 \cdot s = 1_B = \pi_2 \cdot s$.  In particular, $\pi_1$ is a retraction and hence lies in $\E$, so there is a unique $w$ making the diagram commute.  Hence $1_B = \pi_2 \cdot s = w \cdot \pi_1 \cdot s = w \cdot 1_B = w$, so $\pi_2 = w \cdot \pi_1 = 1_B \cdot \pi_1 = \pi_1$ and the result follows by \bref{prop:enr_mono_kp}.
\end{proof}

\begin{PropSub} \label{thm:strmono_rightclass_epi_leftclass}
Let $\B$ be a $\V$-category with $\V$-kernel-pairs.  Then we have the following:
\begin{enumerate}
\item $\StrMono_\V\B = (\Epi_\V\B)^{\downarrow_\V}$.
\item If $\B$ also has $\V$-cokernel-pairs, then $\Epi_\V\B = (\StrMono_\V\B)^{\uparrow_\V}$.
\end{enumerate}
\end{PropSub}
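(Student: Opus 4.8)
The plan is to derive both statements from the general result \bref{prop:pref_mono_epi} on $\V$-prefactorization-systems, applied to the canonical prefactorization-system generated by $\Epi_\V\B$, namely $(\E,\M) := ((\Epi_\V\B)^{\downarrow_\V\uparrow_\V},\, (\Epi_\V\B)^{\downarrow_\V})$, which is a $\V$-prefactorization-system by the remark following \bref{def:enr_orth_fact}.

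For 1, since $\StrMono_\V\B = (\Epi_\V\B)^{\downarrow_\V} \cap \Mono_\V\B = \M \cap \Mono_\V\B$ by \bref{def:enr_str_mon}, the inclusion $\StrMono_\V\B \subs \M$ is trivial, so it suffices to prove the reverse inclusion $\M \subs \Mono_\V\B$. I would obtain this from \bref{prop:pref_mono_epi} 1, whose hypotheses I must check: $\B$ has $\V$-kernel-pairs by assumption, so it remains to verify that every retraction in $\B$ lies in $\E$. First I would note that a retraction $r$ with section $s$ satisfies $\B(s,C)\cdot\B(r,C) = 1$ for each $C \in \B$, exhibiting $\B(r,C)$ as a split mono, whence $r \in \Epi_\V\B$. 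Then, using the Galois-connection inclusion $\sH \subs \sH^{\downarrow_\V\uparrow_\V}$ with $\sH = \Epi_\V\B$, I get $\Epi_\V\B \subs \E$, so every retraction lies in $\E$; thus \bref{prop:pref_mono_epi} 1 yields $\M \subs \Mono_\V\B$, completing 1.

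For 2, I would first invoke 1 to rewrite $(\StrMono_\V\B)^{\uparrow_\V} = ((\Epi_\V\B)^{\downarrow_\V})^{\uparrow_\V} = (\Epi_\V\B)^{\downarrow_\V\uparrow_\V} = \E$, so that the claim reduces to $\E = \Epi_\V\B$. The inclusion $\Epi_\V\B \subs \E$ is again the Galois inclusion. For the reverse inclusion I would apply the dual statement \bref{prop:pref_mono_epi} 2: $\B$ now also has $\V$-cokernel-pairs by hypothesis, and every section in $\B$ is a $\V$-strong-mono by \bref{prop:reg_mono_str_mono}, hence lies in $\StrMono_\V\B \subs \M$; therefore \bref{prop:pref_mono_epi} 2 gives $\E \subs \Epi_\V\B$, and combining the two inclusions yields $\E = \Epi_\V\B$, as required.

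The argument is essentially bookkeeping once \bref{prop:pref_mono_epi} is in hand; the only genuine content to verify is the pair of hypotheses of that proposition, i.e.\ that retractions are $\V$-epis (hence in $\E$) and that sections are $\V$-strong-monos (hence in $\M$), both of which are immediate from the split-(mono/epi) arguments and \bref{prop:reg_mono_str_mono}. The main subtlety I would watch is keeping the two Galois-connection inclusions straight and ensuring that part 1 (rather than the merely definitional containment $\StrMono_\V\B \subs (\Epi_\V\B)^{\downarrow_\V}$) is invoked when rewriting $(\StrMono_\V\B)^{\uparrow_\V}$ in part 2, since the equality there genuinely requires $\StrMono_\V\B = (\Epi_\V\B)^{\downarrow_\V}$ and not just an inclusion.
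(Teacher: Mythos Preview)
Your proof is correct and follows essentially the same route as the paper's own argument: both set up the $\V$-prefactorization-system $(\E,\M) = ((\Epi_\V\B)^{\downarrow_\V\uparrow_\V},(\Epi_\V\B)^{\downarrow_\V})$, verify that retractions lie in $\E$ (the paper phrases this as ``absolute epi, hence $\V$-epi'') to invoke \bref{prop:pref_mono_epi}~1 for part~1, and then use \bref{prop:reg_mono_str_mono} to place sections in $\M$ and invoke \bref{prop:pref_mono_epi}~2 for part~2. Your exposition is somewhat more explicit about the Galois-connection inclusions, but the substance is the same.
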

\begin{proof}
Let $(\E,\M) := ((\Epi_\V\B)^{\downarrow_\V\uparrow_\V},(\Epi_\V\B)^{\downarrow_\V})$.  Every retraction in $\B$ is an absolute epi and hence is a $\V$-epi and so lies in $\E$, so by \bref{prop:pref_mono_epi} 1, $\M \subs \Mono_\V\B$ and hence $\StrMono_\V\B = \M \cap \Mono_\V\B = \M$.  If $\B$ also has $\V$-cokernel-pairs, then since every section lies in $\M$ \pbref{prop:reg_mono_str_mono} we deduce by \bref{prop:pref_mono_epi} 2 that $\E \subs \Epi_\V\B$, so $\Epi_\V\B = \E = \M^{\uparrow_\V} = (\StrMono_\V\B)^{\uparrow_\V}$.
\end{proof}

\begin{RemSub} \label{rem:ord_epi_strm_prefactn}
For ordinary categories, one can improve upon \bref{thm:strmono_rightclass_epi_leftclass}.  It was recognized in \cite{CHK} that 2.1.4 of \cite{FrKe} entails that the epimorphisms and strong monomorphisms of a finitely-complete \textit{or} -cocomplete ordinary category together constitute a prefactorization system.  Indeed, by 2.1.4 of \cite{FrKe}, if $\B$ has kernel pairs or finite coproducts, then $\StrMono\B = (\Epi\B)^\downarrow$, and if $\B$ also has cokernel pairs or finite products, then $\Epi\B = (\StrMono\B)^\uparrow$.
\end{RemSub}

On the other hand, if $(\Epi_\V\B,\StrMono_\V\B)$-factorizations are known to exist in $\B$, then the hypotheses of \ref{thm:strmono_rightclass_epi_leftclass} are superfluous:

\begin{PropSub}\label{thm:epi_strmono_factn_sys_if_factns_exists}
If $(\Epi_\V\B,\StrMono_\V\B)$-factorizations exist in a $\V$-category $\B$, then $(\Epi_\V\B,\StrMono_\V\B)$ is a $\V$-factorization system on $\B$.
\end{PropSub}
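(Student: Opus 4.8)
The plan is to apply the characterization \bref{thm:crit_factn_sys}, taking $\E := \Epi_\V\B$ and $\M := \StrMono_\V\B$. Since $(\Epi_\V\B,\StrMono_\V\B)$-factorizations are assumed to exist, condition 3 of \bref{thm:crit_factn_sys} holds by hypothesis, so it remains only to verify conditions 1 and 2.

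Condition 2---that each $e \in \Epi_\V\B$ is $\V$-orthogonal to each $m \in \StrMono_\V\B$---is immediate from the very definition of $\V$-strong-mono \pbref{def:enr_str_mon}. Indeed $\StrMono_\V\B = (\Epi_\V\B)^{\downarrow_\V} \cap \Mono_\V\B \subs (\Epi_\V\B)^{\downarrow_\V}$, so by construction every $\V$-strong-mono $m$ satisfies $e \downarrow_\V m$ for every $\V$-epi $e$. This is the crux of the argument, and it is what lets us dispense with the hypotheses on $\V$-kernel-pairs and $\V$-cokernel-pairs required in \bref{thm:strmono_rightclass_epi_leftclass}: whereas there the kernel-pair assumptions were needed precisely in order to establish the prefactorization identities $\StrMono_\V\B = (\Epi_\V\B)^{\downarrow_\V}$ and $\Epi_\V\B = (\StrMono_\V\B)^{\uparrow_\V}$, here we bypass those identities altogether, since \bref{thm:crit_factn_sys} derives the prefactorization property from orthogonality together with the existence of factorizations.

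For condition 1, I would observe that each of the two classes is closed under composition and contains every isomorphism, and is therefore closed under composition with isomorphisms on either side. For $\StrMono_\V\B$, closure under composition is part of \bref{thm:stability_and_canc_for_strmonos}, and every isomorphism---being a section---is a $\V$-strong-mono by \bref{prop:reg_mono_str_mono}. Dually, $\Epi_\V\B$ is closed under composition, being the class of $\V$-monos of $\B^\op$ and hence inheriting closure under composition from \bref{thm:closure_props_of_monos}, and it contains every isomorphism, since isomorphisms are sections and thus $\V$-monos by \bref{prop:section_reg_mono} applied in $\B^\op$.

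With conditions 1, 2, and 3 thereby in hand, \bref{thm:crit_factn_sys} yields at once that $(\Epi_\V\B,\StrMono_\V\B)$ is a $\V$-factorization-system. I expect no substantive obstacle here: the entire weight of the proof rests on the single observation that the $\V$-orthogonality demanded by condition 2 is already built into the definition of $\V$-strong-mono, so that once factorizations are granted, the characterization theorem supplies both the prefactorization property and the essential uniqueness of factorizations for free.
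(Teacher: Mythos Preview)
Your proof is correct and takes exactly the same approach as the paper, which simply says ``This follows from \bref{thm:crit_factn_sys}''; you have merely supplied the routine verifications of conditions 1 and 2 that the paper leaves implicit. The key observation is, as you note, that condition 2 is immediate from the definition of $\V$-strong-mono.
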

\begin{proof}
This follows from \bref{thm:crit_factn_sys}.
\end{proof}

\begin{PropSub} \label{thm:ord_str_monos_in_tens_cotens_vcat_are_enriched}
Suppose that $\B$ is a tensored and cotensored $\V$-category.  Then
$\StrMono_{\V}\B = \StrMono\B$, and $\Epi_\V\B = \Epi\B$.
\end{PropSub}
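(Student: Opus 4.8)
The plan is to assemble the statement from three ingredients already available: the coincidence of enriched and ordinary monos and epis under (co)tensoring \pbref{thm:lim_mono_epi_in_tens_cot_vcat_are_enr}, the collapse of enriched orthogonality to ordinary orthogonality for a tensor-closed class \pbref{thm:enr_orth_from_ord}, and the definitional description $\StrMono_\V\B = (\Epi_\V\B)^{\downarrow_\V} \cap \Mono_\V\B$ \pbref{def:enr_str_mon}. The equation $\Epi_\V\B = \Epi\B$ requires nothing new: since $\B$ is cotensored, this is exactly the second assertion of \bref{thm:lim_mono_epi_in_tens_cot_vcat_are_enr}. Dually, since $\B$ is tensored, the same proposition gives $\Mono_\V\B = \Mono\B$. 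Thus both factors appearing in the definition of $\StrMono_\V\B$ already have ordinary counterparts, and it remains only to match the orthogonality factor $(\Epi_\V\B)^{\downarrow_\V}$ with the ordinary $(\Epi\B)^\downarrow$.

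To invoke \bref{thm:enr_orth_from_ord} with $\E = \Epi_\V\B$, I would first verify that $\Epi_\V\B$ is closed under tensors in $\B$ \pbref{def:closed_under_tensors}. Given a $\V$-epi $e:A_1 \to A_2$ and $V \in \V$ for which $V \otimes A_1$ and $V \otimes A_2$ exist, I must show that $\B(V \otimes e, A)$ is mono in $\V$ for each $A \in \B$. The tensor adjunction supplies $\B(V \otimes A_i, A) \cong \uV(V, \B(A_i, A))$ naturally in $A_i$, under which $\B(V \otimes e, A)$ becomes $\uV(V, \B(e, A))$; as $e$ is a $\V$-epi the morphism $\B(e, A)$ is mono, and the right adjoint $\uV(V,-)$ preserves monos, so $\uV(V, \B(e, A))$, and hence $\B(V \otimes e, A)$, is mono. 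Therefore $V \otimes e$ is a $\V$-epi, as required. This is the dual of the cotensor computation already carried out in \bref{thm:closure_props_of_monos}.

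With $\B$ tensored and $\Epi_\V\B$ closed under tensors, \bref{thm:enr_orth_from_ord} yields $(\Epi_\V\B)^{\downarrow_\V} = (\Epi_\V\B)^{\downarrow}$, and substituting the already-established $\Epi_\V\B = \Epi\B$ on the right gives $(\Epi_\V\B)^{\downarrow_\V} = (\Epi\B)^{\downarrow}$. Intersecting with the identity $\Mono_\V\B = \Mono\B$ then produces the chain
\[
\StrMono_\V\B = (\Epi_\V\B)^{\downarrow_\V} \cap \Mono_\V\B = (\Epi\B)^{\downarrow} \cap \Mono\B = \StrMono\B,
\]
where the outer equalities are the definitions of enriched and ordinary strong monos, respectively. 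This is the asserted equality of classes.

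I expect the only step carrying any content to be the tensor-closure of $\Epi_\V\B$. One might hope to read it off \bref{prop:leftclass_closed_under_tensors}, but that result delivers closure only for the left class $(\Epi_\V\B)^{\downarrow_\V\uparrow_\V}$ of the associated $\V$-prefactorization-system, which could a priori be strictly larger than $\Epi_\V\B$ itself; hence the short adjunction computation of the second paragraph cannot be bypassed, while everything else is bookkeeping with the cited results.
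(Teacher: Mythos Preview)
Your proof is correct and follows essentially the same route as the paper's: both establish $\Epi_\V\B = \Epi\B$ via \bref{thm:lim_mono_epi_in_tens_cot_vcat_are_enr}, verify that the class of epis is closed under tensors, invoke \bref{thm:enr_orth_from_ord} to collapse $(\Epi_\V\B)^{\downarrow_\V}$ to $(\Epi\B)^\downarrow$, and then intersect with $\Mono_\V\B = \Mono\B$. The only difference is in how tensor-closure is checked: the paper observes that, since $\B$ is also cotensored, each $V \otimes (-):\B \to \B$ is an ordinary left adjoint (to $[V,-]$) and hence preserves ordinary epis, whereas you carry out the dual of the cotensor computation in \bref{thm:closure_props_of_monos} to show directly that $V \otimes (-)$ preserves $\V$-epis; both arguments are short and yield the same conclusion, and your closing remark about why \bref{prop:leftclass_closed_under_tensors} does not suffice on its own is well taken.
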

\begin{proof}
The second equation was established in \bref{thm:lim_mono_epi_in_tens_cot_vcat_are_enr}.  For each $V \in \V$, the functor $V \otimes (-):\B \rightarrow \B$ is left adjoint (to $[V,-]:\B \rightarrow \B$) and hence preserves epis.  Hence the class $\E := \Epi\B$ satisfies the hypotheses of \bref{thm:enr_orth_from_ord}, and we compute that
$$(\Epi_\V\B)^{\downarrow_\V} = (\Epi\B)^{\downarrow_\V} = (\Epi\B)^{\downarrow}\;.$$
Therefore, by \bref{thm:lim_mono_epi_in_tens_cot_vcat_are_enr}
$$\StrMono_{\V}\B = (\Epi_\V\B)^{\downarrow_\V} \cap \Mono_\V\B = (\Epi\B)^{\downarrow} \cap \Mono\B = \StrMono\B\;.$$
\end{proof}

\begin{CorSub}\label{thm:tens_cot_epi_strmono_factn_sys_when_factns_exist}
If $\B$ is a tensored and cotensored $\V$-category in which $(\Epi\B,\StrMono\B)$-factorizations exist, then $(\Epi_\V\B,\StrMono_\V\B) = (\Epi\B,\StrMono\B)$ is a $\V$-factorization-system on $\B$.
\end{CorSub}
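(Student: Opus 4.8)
The plan is to combine the two immediately preceding results, so that the corollary becomes a short piece of bookkeeping. First I would invoke Proposition \ref{thm:ord_str_monos_in_tens_cotens_vcat_are_enriched}: since $\B$ is assumed tensored and cotensored, that proposition yields directly the equalities $\Epi_\V\B = \Epi\B$ and $\StrMono_\V\B = \StrMono\B$, hence the asserted identification $(\Epi_\V\B,\StrMono_\V\B) = (\Epi\B,\StrMono\B)$ of pairs of classes.

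Next, using this identification, I would observe that the hypothesis that $(\Epi\B,\StrMono\B)$-factorizations exist is, verbatim, the statement that $(\Epi_\V\B,\StrMono_\V\B)$-factorizations exist, precisely because the two pairs of classes coincide. Finally I would appeal to Proposition \ref{thm:epi_strmono_factn_sys_if_factns_exists}, which asserts that whenever $(\Epi_\V\B,\StrMono_\V\B)$-factorizations exist in a $\V$-category, the pair $(\Epi_\V\B,\StrMono_\V\B)$ is automatically a $\V$-factorization-system; applying it delivers the conclusion.

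There is no substantive obstacle here, since all the real work has already been discharged upstream: Proposition \ref{thm:ord_str_monos_in_tens_cotens_vcat_are_enriched} identifies the enriched and ordinary classes by exploiting the adjunctions $V \otimes (-) \dashv [V,-]$ together with the orthogonality transfer of Proposition \ref{thm:enr_orth_from_ord}, while Proposition \ref{thm:epi_strmono_factn_sys_if_factns_exists} itself reduces to the criterion of Proposition \ref{thm:crit_factn_sys}. The only point requiring care is the order of the steps: the identification of classes must be established \emph{before} invoking the existence-of-factorizations result, so that the hypothesis and the conclusion of Proposition \ref{thm:epi_strmono_factn_sys_if_factns_exists} are phrased in terms of the same (enriched) classes, and the passage from ordinary to $\V$-factorizations is legitimate.
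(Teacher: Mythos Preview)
Your proposal is correct and follows exactly the same approach as the paper, which likewise derives the corollary by combining Proposition~\ref{thm:ord_str_monos_in_tens_cotens_vcat_are_enriched} (to identify the enriched and ordinary classes) with Proposition~\ref{thm:epi_strmono_factn_sys_if_factns_exists} (to obtain the $\V$-factorization-system from the existence of factorizations).
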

\begin{proof}
This follows from \bref{thm:epi_strmono_factn_sys_if_factns_exists} and \bref{thm:ord_str_monos_in_tens_cotens_vcat_are_enriched}.
\end{proof}

\begin{ExaSub}\label{exa:quasitopos}
Every quasitopos $\X$ (and in particular, any topos) carries an $\X$-enriched factorization system $(\E,\M)$, where $\E = \Epi\X = \Epi_\X\uX$, $\M = \StrMono\X = \StrMono_\X\uX$, and $\X$ is endowed with its cartesian closed monoidal structure.  Indeed, this follows from \bref{thm:tens_cot_epi_strmono_factn_sys_when_factns_exist} since every morphism in $\X$ factors as an epi followed by a strong mono (\cite{Pen:SurQuTo} 2.10).  Note that the strong monos in $\X$ coincide with the regular monos.  If $\X$ is a topos, then every monomorphism is regular and hence strong, so $\M = \Mono\X = \Mono_\X\uX$; in this case, every epimorphism is regular and hence strong, and so since $\uX^\op$ is tensored and cotensored we deduce by \bref{thm:ord_str_monos_in_tens_cotens_vcat_are_enriched} that $(\E,\M) = (\StrEpi\X,\Mono\X) = (\StrEpi_\X\uX,\Mono_\X\uX)$ as well.
\end{ExaSub}

The following notion is a $\V$-enriched analogue of the notion of \textit{finitely well-complete (ordinary) category} of \cite{CHK}:
\begin{DefSub} \label{def:enr_fwc}
A $\V$-category $\B$ is \textit{$\V$-finitely-well-complete ($\V$-f.w.c.)} if
\begin{enumerate}
\item $\B$ has all finite $\V$-limits, and
\item $\B$ has $\V$-intersections of arbitrary (class-indexed) families of $\V$-strong-monos.
\end{enumerate}
\end{DefSub}
\begin{RemSub} \label{rem:fwc}
By \bref{thm:stability_and_canc_for_strmonos}, the $\V$-intersections of $\V$-strong-monos required in \bref{def:enr_fwc} are necessarily $\V$-strong-monos.
\end{RemSub}

\begin{PropSub} \label{thm:tens_cot_vcat_fwc}
Let $\B$ be a tensored and cotensored $\V$-category.  Then $\B$ is $\V$-finitely-well-complete if and only if $\B$ is finitely well-complete.
\end{PropSub}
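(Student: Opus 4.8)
The plan is to leverage the fact that in a tensored $\V$-category the ordinary limits and the $\V$-limits coincide, together with the identification of ordinary and enriched strong monos furnished by \bref{thm:ord_str_monos_in_tens_cotens_vcat_are_enriched}. Concretely, I would show that each of the two defining conditions of $\V$-finite-well-completeness \pbref{def:enr_fwc} is equivalent to its unenriched counterpart, so that the biconditional follows termwise.

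First I would dispatch condition (1). Since $\B$ is tensored, \bref{thm:lim_mono_epi_in_tens_cot_vcat_are_enr} tells us that every ordinary limit in $\B$ is a $\V$-limit; conversely, by \bref{def:enr_mono_limit} every $\V$-limit is in particular an ordinary limit. Hence a finite diagram in $\B$ admits a $\V$-limit if and only if it admits an ordinary limit, the two limits agreeing when they exist. Thus $\B$ has all finite $\V$-limits precisely when it has all finite limits.

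Next I would handle condition (2). Because $\B$ is tensored and cotensored, \bref{thm:ord_str_monos_in_tens_cotens_vcat_are_enriched} gives $\StrMono_\V\B = \StrMono\B$, so a class-indexed family of $\V$-strong-monos is exactly a class-indexed family of ordinary strong monos. By \bref{def:intersection} and \bref{prop:intersection}, a $\V$-intersection of such a family is a $\V$-fibre-product, while an ordinary intersection is the corresponding ordinary wide pullback. Applying tensoredness once more as in the previous paragraph, the ordinary wide pullback of the family exists if and only if its $\V$-fibre-product does, and the two coincide when they exist. Hence $\B$ admits $\V$-intersections of all class-indexed families of $\V$-strong-monos if and only if it admits intersections of all such families of strong monos.

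Combining the two equivalences yields the stated result. I do not anticipate a genuine obstacle here: the entire argument rests on the single structural observation, already recorded earlier in the paper, that tensoredness collapses the distinction between $\V$-limits and ordinary limits, and that tensoredness together with cotensoredness does the same for $\V$-strong-monos and strong monos. The only point demanding a moment's care is the passage from ordinary to $\V$-intersections, where one must note that an ordinary wide pullback, being an ordinary limit, is automatically a $\V$-fibre-product in the tensored setting, so that existence transfers in both directions.
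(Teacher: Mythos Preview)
Your proposal is correct and takes essentially the same approach as the paper's proof, which is a one-line invocation of \bref{thm:lim_mono_epi_in_tens_cot_vcat_are_enr} and \bref{thm:ord_str_monos_in_tens_cotens_vcat_are_enriched} to identify finite $\V$-limits, $\V$-intersections, and $\V$-strong-monos with their ordinary counterparts. You have simply spelled out the termwise equivalences in greater detail, including the careful observation that ordinary wide pullbacks are automatically $\V$-fibre-products in the tensored setting.
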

\begin{proof}
Finite $\V$-limits, $\V$-intersections, and $\V$-strong-monos in $\B$ are the same as the corresponding ordinary notions (by \bref{thm:lim_mono_epi_in_tens_cot_vcat_are_enr}, \bref{thm:ord_str_monos_in_tens_cotens_vcat_are_enriched}).
\end{proof}

\begin{ExaSub}\label{exa:fwc}
If a small-complete category $\B$ is \textit{well-powered with respect to strong monos}, meaning that each of its objects has but a set of strong subobjects, then $\B$ is f.w.c.  Hence if $\B$ satisfies either of the following conditions, then both $\B$ and $\B^\op$ are f.w.c.:
\begin{enumerate}
\item $\B$ is locally presentable.
\item $\B$ is topological over $\Set$.
\end{enumerate}
Indeed, in case 1, $\B$ is complete, cocomplete, and well-powered (\cite{AdRo:LPrAcCats} 1.56), and $\B$ is also co-well-powered (\cite{AdRo:LPrAcCats} 1.58).  In case 2, the strong monomorphisms in $\B$ are exactly the initial injections, and the strong epimorphisms are the final surjections (e.g. by \cite{Wy:QuTo} 11.9), so that strong subobjects correspond bijectively to subsets of the underlying set, and strong quotients correspond to equivalence relations on the underlying set.

In particular, if $\V$ itself is complete and well-powered with respect to strong monos, then $\uV$ is $\V$-f.w.c. (by \bref{thm:tens_cot_vcat_fwc}, since $\uV$ is tensored and cotensored).  If $\V$ is cocomplete and co-well-powered with respect to strong epis, then $\uV^\op$ is $\V$-f.w.c., again by \bref{thm:tens_cot_vcat_fwc}.  For example, if $\V$ is any symmetric monoidal closed category whose underlying ordinary category is (i) locally presentable  or (ii) topological over $\Set$, then both $\uV$ and $\uV^\op$ are $\V$-finitely-well-complete.
\end{ExaSub}

\begin{PropSub}\label{thm:radj_pres_strmonos}
Every right $\V$-adjoint $\V$-functor preserves $\V$-strong-monos.
\end{PropSub}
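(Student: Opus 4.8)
The plan is to check the two defining conditions of a $\V$-strong-mono \pbref{def:enr_str_mon} for the image $Gm$, where I write $G:\B \to \A$ for the given right $\V$-adjoint $\V$-functor, $F:\A \to \B$ for its left $\V$-adjoint, and $m:B_1 \to B_2$ for a $\V$-strong-mono in $\B$. First I would observe that $Gm$ is a $\V$-mono in $\A$: this is immediate from \bref{prop:rvadj_pres_vmonos}, since $G$ is a right $\V$-adjoint. It then remains to show that $e \downarrow_\V Gm$ for every $\V$-epi $e:A_1 \to A_2$ in $\A$.

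The key idea is to transport the orthogonality square for $e \downarrow_\V Gm$ across the adjunction hom-isomorphism $\A(A,GB) \cong \B(FA,B)$. Given a $\V$-epi $e:A_1 \to A_2$ in $\A$, the dual of \bref{prop:rvadj_pres_vmonos}---that left $\V$-adjoints preserve $\V$-epis---shows that $Fe:FA_1 \to FA_2$ is a $\V$-epi in $\B$. Since $m$ is a $\V$-strong-mono in $\B$, it follows that $Fe \downarrow_\V m$, that is, the square \eqref{eqn:orth_pb} formed from $Fe$ and $m$ is a pullback in $\V$.

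The remaining step is to identify the orthogonality square \eqref{eqn:orth_pb} for $e$ and $Gm$ with the one for $Fe$ and $m$. The $\V$-natural isomorphism $\A(A,GB) \cong \B(FA,B)$ induced by the adjunction converts, by naturality in the variable $B$, each horizontal map $\A(A_i,Gm)$ into $\B(FA_i,m)$, and, by naturality in the variable $A$, each vertical map $\A(e,GB_j)$ into $\B(Fe,B_j)$. Hence these two commutative squares in $\V$ are isomorphic, and since the square for $Fe \downarrow_\V m$ is a pullback, so is the square for $e \downarrow_\V Gm$. Thus $e \downarrow_\V Gm$; as $e$ ranged over all $\V$-epis of $\A$, we conclude that $Gm \in (\Epi_\V\A)^{\downarrow_\V} \cap \Mono_\V\A = \StrMono_\V\A$.

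I do not anticipate a serious obstacle here. The only point demanding care is the bookkeeping of $\V$-naturality in the last step, namely verifying that the adjunction isomorphism carries one orthogonality square onto the other as an isomorphism of squares in $\V$, so that the pullback property is reflected. This is a mild elaboration of the $\V$-limit-preservation argument already invoked for \bref{prop:rvadj_pres_vmonos}.
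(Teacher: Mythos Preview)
Your proof is correct and follows essentially the same approach as the paper: use \bref{prop:rvadj_pres_vmonos} to see that $Gm$ is a $\V$-mono and that $Fe$ is a $\V$-epi, then transport the orthogonality square for $(e,Gm)$ across the $\V$-natural adjunction isomorphism to the orthogonality square for $(Fe,m)$, which is a pullback since $m$ is $\V$-strong-mono. The paper's proof is precisely this argument, with the two isomorphic squares displayed explicitly.
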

\begin{proof}
Let $F \dashv G : \B \rightarrow \A$ be a $\V$-adjunction, and let $m:B_1 \rightarrow B_2$ be a $\V$-strong-mono in $\B$.  By \bref{prop:rvadj_pres_vmonos}, $G$ preserves $\V$-monos, so $Gm$ is $\V$-mono.  Letting $e:A_1 \rightarrow A_2$ be a $\V$-epi in $\A$, we must show that $e \downarrow_\V Gm$, i.e. that the square
$$
\xymatrix{
\A(A_2,GB_1) \ar[d]_{\A(e,GB_1)} \ar[rr]^{\A(A_2,Gm)} & & \A(A_2,GB_2) \ar[d]^{\A(e,GB_2)} \\
\A(A_1,GB_1) \ar[rr]_{\A(A_1,Gm)} & & \A(A_1,GB_2)
}
$$
is a pullback.  But the latter square is isomorphic to the square
$$
\xymatrix{
\B(FA_2,B_1) \ar[d]_{\B(Fe,B_1)} \ar[rr]^{\B(FA_2,m)} & & \B(FA_2,B_2) \ar[d]^{\B(Fe,B_2)} \\
\B(FA_1,B_1) \ar[rr]_{\B(FA_1,m)} & & \B(FA_1,B_2)
}
$$
which is a pullback since $Fe$ is a $\V$-epi (by \bref{prop:rvadj_pres_vmonos}).
\end{proof}

\begin{PropSub}\label{thm:cat_algs_vfwc}\emptybox
\begin{enumerate}
\item Let $F \dashv G : \B \rightarrow \A$ be a $\V$-adjunction with $\A$ $\V$-finitely-well-complete, and suppose that $G$ detects $\V$-limits (i.e., a diagram $D:\J \rightarrow \B$ has a $\V$-limit as soon as its composite with $G$ does).  Then $\B$ is $\V$-finitely-well-complete.
\item Let $\A$ be a $\V$-finitely-well-complete $\V$-category, let $\TT$ be a $\V$-monad on $\A$, and assume that $\V$ has the equalizers needed in order to form the $\V$-category of Eilenberg-Moore algebras $\A^\TT$ \textnormal{(\cite{Dub}, II.1)}.  Then $\A^\TT$ is $\V$-finitely-well-complete.
\end{enumerate}
\end{PropSub}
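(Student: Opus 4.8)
The plan is to prove part 1 directly from the preservation and closure results already in hand, and then to obtain part 2 by exhibiting the forgetful $\V$-functor of $\A^\TT$ as a right $\V$-adjoint that detects $\V$-limits, so that part 1 applies.

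For part 1, I would check the two clauses of \bref{def:enr_fwc} for $\B$ in turn. Clause 1 is immediate: if $D:\J \to \B$ is a finite diagram, then $GD$ has a $\V$-limit because $\A$ is $\V$-f.w.c., and since $G$ detects $\V$-limits, $D$ has a $\V$-limit; thus $\B$ has all finite $\V$-limits. For clause 2, let $(m_i:B_i \to C)_{i \in \I}$ be a class-indexed family of $\V$-strong-monos in $\B$, and let $D:\J \to \B$ be the wide-cospan diagram encoding this family (its non-identity arrows being the $m_i$, all with codomain the object sent to $C$). As $G$ is a right $\V$-adjoint, \bref{thm:radj_pres_strmonos} gives that each $Gm_i$ is a $\V$-strong-mono in $\A$; since $\A$ is $\V$-f.w.c., the family $(Gm_i)_{i \in \I}$ has a $\V$-intersection, which is exactly a $\V$-limit of $GD$. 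Detection of $\V$-limits by $G$ then produces a $\V$-limit of $D$, namely a $\V$-fibre-product of the $m_i$; by \bref{prop:intersection} and \bref{def:intersection} this is a $\V$-intersection of the $m_i$, and it is itself a $\V$-strong-mono by \bref{rem:fwc}. Hence $\B$ satisfies both clauses and is $\V$-f.w.c.

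For part 2, write $T$ for the underlying $\V$-endofunctor and $\mu$ for the multiplication of $\TT$. The free-algebra $\V$-functor $F^\TT:\A \to \A^\TT$, $A \mapsto (TA,\mu_A)$, is left $\V$-adjoint to the forgetful $\V$-functor $G^\TT:\A^\TT \to \A$, so $G^\TT$ is a right $\V$-adjoint; granting that $G^\TT$ detects $\V$-limits, part 1 applied to the $\V$-adjunction $F^\TT \dashv G^\TT$ yields at once that $\A^\TT$ is $\V$-finitely-well-complete.

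The one substantial point, and hence the main obstacle, is the detection (in fact creation) of conical $\V$-limits by $G^\TT$, which I would establish as the enriched counterpart of the classical creation of limits by an Eilenberg--Moore forgetful functor. Given $D:\J \to \A^\TT$ whose composite $G^\TT D$ has a $\V$-limit $(L,(p_j))$ in $\A$, the family $(a_j \cdot Tp_j)_j$, with $a_j$ the structure map of $Dj$, forms a cone over $G^\TT D$ with vertex $TL$ (because the arrows of $D$ are algebra morphisms), so the universal property of $L$ yields a unique $\ell:TL \to L$ with $p_j \cdot \ell = a_j \cdot Tp_j$; the uniqueness clause then forces the algebra axioms for $(L,\ell)$ and makes each $p_j$ an algebra morphism. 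To upgrade the resulting ordinary limit cone to a \emph{$\V$-limit} in $\A^\TT$, I would use that each hom-object $\A^\TT((A,a),(B,b))$ is an equalizer in $\V$ of a parallel pair $\A(A,B) \rightrightarrows \A(TA,B)$ (this is the construction requiring equalizers in $\V$). Since $\A(A,-)$ and $\A(TA,-)$ each carry $(L,(p_j))$ to a limit, and since in $\V$ such limits commute with the defining equalizers (which exist by hypothesis), one gets $\A^\TT((A,a),(L,\ell)) \cong \lim_j \A^\TT((A,a),Dj)$ naturally in $(A,a)$, which is precisely the assertion that $(L,\ell)$ is a $\V$-limit of $D$ created by $G^\TT$. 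This commutation of equalizers with conical limits is the only genuinely technical step and is entirely standard; alternatively one may simply cite the creation of $\V$-limits by the forgetful $\V$-functor from \cite{Dub}.
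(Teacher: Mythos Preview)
Your proof is correct and follows essentially the same approach as the paper: for part 1 you unpack what the paper summarizes as ``follows readily from \bref{thm:radj_pres_strmonos}'' by checking both clauses of \bref{def:enr_fwc} via detection plus preservation of $\V$-strong-monos, and for part 2 you reduce to part 1 via the free--forgetful $\V$-adjunction exactly as the paper does. Your added discussion of creation of $\V$-limits by $G^\TT$ (with the option of citing \cite{Dub}) is more detailed than the paper, which simply asserts that $G^\TT$ detects (indeed creates) $\V$-limits.
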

\begin{proof}
1 follows readily from \bref{thm:radj_pres_strmonos}.  2 follows from 1, since the forgetful $\V$-functor $G:\A^\TT \rightarrow \A$ is a right $\V$-adjoint and detects (indeed \textit{creates}) $\V$-limits.
\end{proof}

\section{Enriched factorization systems by wide (co)intersections} \label{sec:enr_factn_det_cls_monos}

In the present section we obtain several results on the existence of enriched factorization systems when wide intersections of certain kinds of monomorphisms exist.  In particular, we obtain results on the existence of both $(\Epi_\V,\StrMono_\V)$-factorizations as well as $(\StrEpi_\V,\Mono_\V)$-factorizations.  By dualizing, one obtains further results on the existence of such $\V$-factorization-systems when wide \textit{co}intersections exist.  A summary of several of these results is provided in Theorem \bref{thm:summary}.

Let us recall the following result for ordinary categories, which is Lemma 3.1 of \cite{CHK}\footnote{The result and variations thereupon were known earlier, however; e.g. cf. \cite{Th:SemiTop} 6.3, 6.5, 7.3.}.  Whereas the proof is sketched in \cite{CHK}, we include a proof, and then we consider the extent to which this result generalizes to the enriched context, obtaining several corollaries for cotensored enriched categories.

\begin{PropSub}\label{thm:chk_lemma}
Let $\M$ be a class of monomorphisms in an (ordinary) category $\B$.  Suppose that (i) arbitrary intersections of $\M$-morphisms exist in $\B$ and again lie in $\M$, (ii) pullbacks of $\M$-morphisms along arbitrary morphisms exist in $\B$ and again lie in $\M$, and (iii) $\M$ is closed under composition.  Then $(\M^\uparrow,\M)$ is a factorization system on $\B$.
\end{PropSub}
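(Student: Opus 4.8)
The plan is to exploit the fact that $(\M^\uparrow,\M^{\uparrow\downarrow})$ is automatically an (ordinary) prefactorization system, so that it suffices to (a) produce, for each morphism $f$, a factorization $f = m\cdot e$ with $m\in\M$ and $e\in\M^\uparrow$, and (b) show that $\M^{\uparrow\downarrow}=\M$. Part (a) supplies the required factorizations, with right-hand part in $\M$ and left-hand part in $\M^\uparrow$, while (b) identifies the right class of the prefactorization system with $\M$ itself, so that $(\M^\uparrow,\M)$ is seen to be a factorization system.

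For the factorization, given $f:X\to Y$ I would take $m:M\to Y$ to be the intersection of the (possibly large) family of all $\M$-monos into $Y$ through which $f$ factors; by hypothesis (i) this intersection exists and lies in $\M$. Since each member of the family is monic and $f$ factors through it, the universal property of the intersection (a wide pullback) produces a unique $e:X\to M$ with $m\cdot e=f$. Moreover $m$ enjoys a minimality property: any $\M$-mono $\bar m$ through which $f$ factors is a member of the family, so $m$ factors through $\bar m$ over $Y$.

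The crux, and the step I expect to be the main obstacle, is verifying $e\in\M^\uparrow$. Fix $n:S\to T$ in $\M$ together with $u:X\to S$ and $v:M\to T$ satisfying $n\cdot u=v\cdot e$. Form the pullback of $n$ along $v$, with projections $p:P\to S$ and $q:P\to M$; by (ii) we have $q\in\M$, and the pullback property yields a unique $t:X\to P$ with $p\cdot t=u$ and $q\cdot t=e$. Now $m\cdot q\in\M$ by (iii), and $f=m\cdot e=(m\cdot q)\cdot t$, so $f$ factors through the $\M$-mono $m\cdot q$. By minimality of the intersection, $m$ factors through $m\cdot q$, say $m\cdot q\cdot s=m$ with $s:M\to P$; since $m$ is monic this forces $q\cdot s=1_M$. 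Setting $w:=p\cdot s$ gives $n\cdot w=n\cdot p\cdot s=v\cdot q\cdot s=v$, and since $q\cdot(s\cdot e)=e=q\cdot t$ with $q$ monic we get $s\cdot e=t$, whence $w\cdot e=p\cdot t=u$. Uniqueness of $w$ is immediate from $n$ being monic, so $e\downarrow n$, and therefore $e\in\M^\uparrow$.

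Finally I would establish $\M^{\uparrow\downarrow}=\M$. The inclusion $\M\subseteq\M^{\uparrow\downarrow}$ is the standard Galois inclusion. Conversely, given $m\in\M^{\uparrow\downarrow}$, factor it as $m=m'\cdot e'$ with $m'\in\M$ and $e'\in\M^\uparrow$ by part (a). Applying the orthogonality $e'\downarrow m$ to the commutative square $m\cdot 1=m'\cdot e'$ yields $d$ with $d\cdot e'=1$ and $m\cdot d=m'$; since $m'$ is monic and $m'\cdot(e'\cdot d)=m\cdot d=m'$, a short computation gives $e'\cdot d=1$ as well, so $e'$ is an isomorphism. It remains to note that $\M$ is closed under precomposition with isomorphisms: for an iso $i$ and $m'\in\M$, the square exhibiting $m'\cdot i$ as a pullback of $m'$ along an identity shows $m'\cdot i\in\M$ by (ii). Hence $m=m'\cdot e'\in\M$. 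This yields $(\M^\uparrow,\M^{\uparrow\downarrow})=(\M^\uparrow,\M)$, which, possessing $(\M^\uparrow,\M)$-factorizations, is a factorization system on $\B$.
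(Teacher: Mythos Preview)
Your proof is correct and follows essentially the same route as the paper: the factorization via the intersection of all $\M$-subobjects through which $f$ factors, and the verification of $e\in\M^\uparrow$ by pulling back $n$ along $v$ and using closure under composition, are identical to the paper's argument (you work with a section $s$ of $q$ where the paper simply observes that the mono $q$, having a section, is an isomorphism---the same computation). The only organizational difference is your explicit step~(b): the paper omits it, implicitly invoking the standard characterization of factorization systems (the ordinary case of Proposition~\bref{thm:crit_factn_sys}), since $\M$ and $\M^\uparrow$ are each closed under composition with isomorphisms, orthogonality $\E\downarrow\M$ holds by definition of $\M^\uparrow$, and factorizations have just been produced.
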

\begin{proof}
Let $g:C \rightarrow B$ in $\B$.  Let $m_0:M \rightarrow B$ be the intersection of all $m:M_m \rightarrow B$ in $\M$ through which $g$ factors --- i.e. for which there exists a (necessarily unique) $g_m:C \rightarrow M_m$ with $m \cdot g_m = g$.  Then $m_0 \in \M$ by hypothesis (i).  The family of all $g_m:C \rightarrow M_m$ induces a unique $e:C \rightarrow M$ such that $m_0 \cdot e = g$ and $\pi_m \cdot e = g_m$ for all $m$, where the morphisms $\pi_m:M \rightarrow M_m$ present $m_0$ as an intersection of the morphisms $m$.

Hence we have a factorization $m_0 \cdot e$ of $g$ with $m_0 \in \M$, and it remains to show that $e \in \M^\uparrow$.  Let $i:B_1 \rightarrow B_2$ lie in $\M$.  For any $h,k$ such that $i \cdot k = h \cdot e$ we obtain a commutative diagram
$$
\xymatrix{
  &                       & C \ar@/_2ex/[dddll]_g \ar@/_/[ddl]^{e} \ar@{-->}[d]|w \ar@/^/[ddr]^k & \\
  &                       & P \ar[dl]^{j} \ar[dr]_{h'}                                           & \\
  & M \ar[dl]^{m_0} \ar[dr]_h &                                                                      & B_1 \ar[dl]^i \\
B &                       & B_2                                                                  &  
}
$$
in which the diamond with top vertex $P$ is a pullback and $w$ is the unique morphism into this pullback making the diagram commute.  Since $i \in \M$, the pullback $j$ of $i$ lies in $\M$ by hypothesis (ii).  Hence $m_0 \cdot j \in \M$ since $\M$ is closed under composition, so $m_0 \cdot j$ is a morphism in $\M$ through which $g$ factors, so from the definition of $m_0$ as an intersection it follows that $j$ is iso (since $m_0 \cdot j \cdot \pi_{m_0 \cdot j} = m_0$ and hence the mono $j$ is a retraction of $\pi_{m_0 \cdot j}$ and so is iso).  Letting $u := h' \cdot j^{-1}$, we claim that $u$ is the unique morphism with $u \cdot e = k$ and $i \cdot u = h$.  Indeed, $u \cdot e = h' \cdot j^{-1} \cdot e = h' \cdot w = k$, and $i \cdot u = i \cdot h' \cdot j^{-1} = h \cdot j \cdot j^{-1} = h$, and the uniqueness of $u$ is immediate since $i$ is mono.
\end{proof}

\begin{RemSub}\label{rem:chk_lemma_does_not_generalize}
Notice that the given argument does not directly generalize to the enriched setting, since in showing $e \downarrow i$ we have taken a pullback that depends on the specific commutative square for which a diagonal lift is sought.  However, for \textit{cotensored} enriched categories we obtain the following corollaries.
\end{RemSub}

\begin{CorSub}\label{thm:enr_cor_of_chk}
Let $\M$ be a class of monomorphisms in a cotensored $\V$-category $\B$.  Suppose that the hypotheses of \ref{thm:chk_lemma} hold, and further that $\M$ is closed under cotensors.  Then $(\M^{\uparrow_\V},\M) = (\M^\uparrow,\M)$ is a $\V$-factorization-system on $\B$.
\end{CorSub}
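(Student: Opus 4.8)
The plan is to assemble three results already established in the excerpt, applied in sequence. First I would invoke the ordinary CHK lemma \ref{thm:chk_lemma}. Its three hypotheses---closure of $\M$ under arbitrary intersections, under pullbacks along arbitrary morphisms, and under composition---are exactly the hypotheses we have carried over, so \ref{thm:chk_lemma} immediately yields that $(\M^\uparrow,\M)$ is an \emph{ordinary} factorization system on (the underlying ordinary category of) $\B$.

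Next, to reconcile the enriched and ordinary right-orthogonal-complement classes, I would apply the dual half of \ref{thm:enr_orth_from_ord}: since $\B$ is cotensored and $\M$ is closed under cotensors, that result gives $\M^{\uparrow_\V} = \M^\uparrow$. This already establishes the equality of pairs $(\M^{\uparrow_\V},\M) = (\M^\uparrow,\M)$ asserted in the statement, reducing the problem to verifying that this common pair is a $\V$-factorization-system. For that final upgrade I would cite the cotensored case of \ref{thm:charn_factn_sys_tensors}, i.e.\ the equivalence of its condition 1 with condition 2$'$. Taking $\E = \M^\uparrow$, condition 2$'$ asks precisely that $(\E,\M)$ be an ordinary factorization system (supplied by the first step) and that $\M$ be closed under cotensors (assumed); hence condition 1 holds, so $(\M^\uparrow,\M)$ is a $\V$-factorization-system. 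Combined with the identification $\M^{\uparrow_\V}=\M^\uparrow$, this is the claim.

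I do not expect a genuine obstacle here, since all the substantive content---both the construction of factorizations by wide intersections and the passage from ordinary to enriched orthogonality in the presence of cotensors---is already packaged in \ref{thm:chk_lemma}, \ref{thm:enr_orth_from_ord}, and \ref{thm:charn_factn_sys_tensors}. The only point demanding care is to confirm that the hypotheses align exactly, in particular that closure under cotensors is available to feed both \ref{thm:enr_orth_from_ord} and \ref{thm:charn_factn_sys_tensors}. As \ref{rem:chk_lemma_does_not_generalize} explains, one cannot simply re-run the CHK argument enriched---the diagonal-fill pullback there depends on the particular square whose lift is sought---so routing the enrichment through the cotensored characterization, rather than through a direct enriched-orthogonality argument, is exactly what makes the proof go through.
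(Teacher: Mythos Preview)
Your proof is correct and follows essentially the same route as the paper: invoke \ref{thm:chk_lemma} for the ordinary factorization system, then use the cotensored case of \ref{thm:charn_factn_sys_tensors} to upgrade it to a $\V$-factorization-system. The only cosmetic difference is that you separately cite \ref{thm:enr_orth_from_ord} to obtain $\M^{\uparrow_\V}=\M^\uparrow$, whereas the paper reads this equality off directly from the fact that $(\M^\uparrow,\M)$ is a $\V$-prefactorization-system once \ref{thm:charn_factn_sys_tensors} has been applied; either ordering works.
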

\begin{proof}
By \ref{thm:chk_lemma}, $(\M^\uparrow,\M)$ is a factorization system on $\B$ and hence by \ref{thm:charn_factn_sys_tensors} is a $\V$-factorization-system, and in particular $\M^{\uparrow_\V} = \M^\uparrow$.
\end{proof}

\begin{CorSub} \label{prop:enr_factn_sys_det_rightcls_monos}
Let $(\E,\M)$ be a $\V$-prefactorization-system on a cotensored $\V$-category $\B$, where $\M \subs \Mono_\V\B$.  Suppose $\B$ has arbitrary $\V$-intersections of $\M$-morphisms as well as $\V$-pullbacks of $\M$-morphisms along arbitrary morphisms.  Then the following hold:
\begin{enumerate}
\item $(\E,\M)$ is a $\V$-factorization-system on $\B$.
\item For any class $\Sigma$ of morphisms in $\B$, if we let $\N := \Sigma^{\downarrow_\V} \cap \M$, then $(\N^{\uparrow_\V},\N)$ is a $\V$-factorization-system on $\B$
\end{enumerate}
\end{CorSub}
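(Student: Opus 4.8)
The plan is to deduce both assertions from Corollary~\bref{thm:enr_cor_of_chk}, by checking in each case that the relevant right class satisfies the hypotheses of \bref{thm:chk_lemma} together with closure under cotensors. First I would observe that 1 is the special case $\Sigma = \emptyset$ of 2: since $\emptyset^{\downarrow_\V} = \mor\B$ vacuously, one has $\N = \mor\B \cap \M = \M$, whence $\N^{\uparrow_\V} = \M^{\uparrow_\V} = \E$ and so $(\N^{\uparrow_\V},\N) = (\E,\M)$. It therefore suffices to prove 2 for an arbitrary class $\Sigma$.

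The structural observation driving the argument is that $\N = \Sigma^{\downarrow_\V} \cap \M$ is an intersection of two right classes of $\V$-prefactorization-systems, each presented in the form $(\sH^{\downarrow_\V\uparrow_\V},\sH^{\downarrow_\V})$ required by \bref{thm:pref_stab}: namely $\M = \E^{\downarrow_\V}$, taking $\sH = \E$ (here $(\E,\M)$ being a $\V$-prefactorization-system gives $\E^{\downarrow_\V\uparrow_\V} = \M^{\uparrow_\V} = \E$), and $\Sigma^{\downarrow_\V}$, taking $\sH = \Sigma$. Hence the stability results of \bref{thm:pref_stab} and the cotensor-closure of \bref{prop:leftclass_closed_under_tensors} apply to each of $\M$ and $\Sigma^{\downarrow_\V}$, and so to $\N$.

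With this in place I would verify the hypotheses of \bref{thm:enr_cor_of_chk} for $\N$. Since $\N \subs \M \subs \Mono_\V\B$, the class $\N$ consists of monomorphisms; it is closed under composition and under cotensors, being an intersection of two classes each with those properties (by \bref{thm:pref_stab}, part 1, and \bref{prop:leftclass_closed_under_tensors}). Every $\N$-morphism is in particular an $\M$-morphism, hence a $\V$-mono, so by the standing hypothesis on $\B$ the $\V$-intersections of families of $\N$-morphisms and the $\V$-pullbacks of $\N$-morphisms along arbitrary morphisms exist; and since every $\V$-limit is an ordinary limit, these supply the ordinary intersections and pullbacks demanded by \bref{thm:chk_lemma}. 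Applying the closure of $\M$ and of $\Sigma^{\downarrow_\V}$ under $\V$-pullbacks and $\V$-fibre-products (\bref{thm:pref_stab}, parts 4 and 5) then shows that these constructions again lie in $\M \cap \Sigma^{\downarrow_\V} = \N$. Thus the hypotheses of \bref{thm:chk_lemma} hold for $\N$ and $\N$ is closed under cotensors, so \bref{thm:enr_cor_of_chk} yields that $(\N^{\uparrow_\V},\N)$ is a $\V$-factorization-system, as desired.

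The only point needing care --- rather than a genuine obstacle --- is the interface between the enriched and ordinary completeness hypotheses: \bref{thm:chk_lemma}, which underlies \bref{thm:enr_cor_of_chk}, is phrased for ordinary intersections and pullbacks, whereas $\B$ is assumed only to possess their $\V$-enriched versions. This is settled by the elementary fact that a $\V$-limit is always an ordinary limit, so the assumed $\V$-intersections and $\V$-pullbacks serve as the required ordinary ones, while the stability properties of \bref{thm:pref_stab} guarantee that the resulting morphisms remain in $\N$.
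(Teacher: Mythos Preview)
Your proof is correct and follows essentially the same route as the paper: both verify the hypotheses of \bref{thm:enr_cor_of_chk} for the right class via the stability properties \bref{thm:pref_stab} and \bref{prop:leftclass_closed_under_tensors}. The only cosmetic difference is that the paper proves 1 first and then reduces 2 to 1 by observing that $\N = \Sigma^{\downarrow_\V} \cap \E^{\downarrow_\V} = (\Sigma \cup \E)^{\downarrow_\V}$, so $(\N^{\uparrow_\V},\N)$ is itself a $\V$-prefactorization-system to which part 1 applies directly, whereas you reduce 1 to 2 and treat $\N$ as an intersection of two right classes.
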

\begin{proof}
1. $\M$ satisfies the hypotheses of \ref{thm:enr_cor_of_chk}, by \ref{thm:pref_stab}, and the result follows.

2. $\N = (\Sigma^{\downarrow_\V}) \cap (\E^{\downarrow_\V}) = (\Sigma \cup \E)^{\downarrow_\V}$, and setting $\sH := \Sigma \cup \E$ we thus have that $(\N^{\uparrow_\V},\N) = (\sH^{\downarrow_\V\uparrow_\V},\sH^{\downarrow_\V})$ is a $\V$-prefactorization-system on $\B$.  Hence, noting that $\N \subseteq \M \subseteq \Mono_\V\B$, we may apply 1 with respect to $(\N^{\uparrow_\V},\N)$.
\end{proof}

\begin{CorSub}\label{thm:str_mono_vfs_on_vfwc_cot_vcat}
Let $\B$ be a $\V$-finitely-well-complete cotensored $\V$-category.
\begin{enumerate}
\item If $\B$ has $\V$-cokernel-pairs, then $(\Epi_\V\B,\StrMono_\V\B)$ is a $\V$-factorization-system on $\B$
\item For any class $\Sigma$ of morphisms in $\B$, if we let $\N := \Sigma^{\downarrow_\V} \cap \StrMono_\V\B$, then $(\N^{\uparrow_\V},\N)$ is a $\V$-factorization-system on $\B$.
\end{enumerate}
\end{CorSub}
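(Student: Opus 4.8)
The plan is to derive both assertions from Corollary \ref{prop:enr_factn_sys_det_rightcls_monos}, applied with $\M := \StrMono_\V\B$, after checking that its standing hypotheses hold. First I would record the facts common to both parts. Since $\B$ is $\V$-finitely-well-complete, it has all finite $\V$-limits \pbref{def:enr_fwc}, and in particular $\V$-kernel-pairs and $\V$-pullbacks. By definition $\StrMono_\V\B \subs \Mono_\V\B$ \pbref{def:enr_str_mon}. The existence of $\V$-intersections of $\StrMono_\V\B$-morphisms is precisely clause \textnormal{2} of $\V$-finite-well-completeness, while $\V$-pullbacks of $\StrMono_\V\B$-morphisms along arbitrary morphisms exist as finite $\V$-limits and again lie in $\StrMono_\V\B$ by \ref{thm:stability_and_canc_for_strmonos}. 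Thus the hypotheses of \ref{prop:enr_factn_sys_det_rightcls_monos} concerning the existence of intersections and pullbacks of $\M$-morphisms are met; what remains, in each part, is to exhibit $\StrMono_\V\B$ as the right class of an appropriate $\V$-prefactorization-system.

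For part \textnormal{1}, I would use the additional hypothesis that $\B$ has $\V$-cokernel-pairs. Together with the $\V$-kernel-pairs supplied by finite $\V$-limits, \ref{thm:strmono_rightclass_epi_leftclass} then gives \emph{both} $\StrMono_\V\B = (\Epi_\V\B)^{\downarrow_\V}$ and $\Epi_\V\B = (\StrMono_\V\B)^{\uparrow_\V}$, so that $(\Epi_\V\B,\StrMono_\V\B)$ is a $\V$-prefactorization-system whose right class is contained in $\Mono_\V\B$. Applying \ref{prop:enr_factn_sys_det_rightcls_monos} \textnormal{1} with $(\E,\M) = (\Epi_\V\B,\StrMono_\V\B)$ then yields that $(\Epi_\V\B,\StrMono_\V\B)$ is a $\V$-factorization-system, as required.

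For part \textnormal{2}, the key observation is that the $\V$-cokernel-pair hypothesis is not needed. Since $\B$ already has $\V$-kernel-pairs, \ref{thm:strmono_rightclass_epi_leftclass} \textnormal{1} alone gives $\StrMono_\V\B = (\Epi_\V\B)^{\downarrow_\V}$; and recalling that for any class $\sH$ the pair $(\sH^{\downarrow_\V\uparrow_\V},\sH^{\downarrow_\V})$ is a $\V$-prefactorization-system, we see that $\StrMono_\V\B$ is the right class of the $\V$-prefactorization-system $\bigl((\Epi_\V\B)^{\downarrow_\V\uparrow_\V},(\Epi_\V\B)^{\downarrow_\V}\bigr)$, with right class again contained in $\Mono_\V\B$. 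Applying \ref{prop:enr_factn_sys_det_rightcls_monos} \textnormal{2} to this prefactorization-system, with the given class $\Sigma$, then produces exactly the $\V$-factorization-system $(\N^{\uparrow_\V},\N)$ for $\N = \Sigma^{\downarrow_\V} \cap \StrMono_\V\B$.

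I expect no genuine obstacle here, since the statement is a corollary of \ref{prop:enr_factn_sys_det_rightcls_monos}; the work is entirely in verifying its hypotheses for $\M = \StrMono_\V\B$. The one point requiring care is to notice that part \textnormal{2}, unlike part \textnormal{1}, can dispense with $\V$-cokernel-pairs: it suffices to know that $\StrMono_\V\B$ is the right class of \emph{some} $\V$-prefactorization-system, which follows from the identity $\StrMono_\V\B = (\Epi_\V\B)^{\downarrow_\V}$ that holds already in the presence of $\V$-kernel-pairs, these being furnished for free by the assumption of finite $\V$-limits. Everything else amounts to citing the correct clause of \ref{thm:strmono_rightclass_epi_leftclass} and \ref{prop:enr_factn_sys_det_rightcls_monos} in each case.
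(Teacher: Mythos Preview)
Your proposal is correct and follows essentially the same route as the paper: apply \ref{prop:enr_factn_sys_det_rightcls_monos} with $\M = \StrMono_\V\B$, using \ref{thm:strmono_rightclass_epi_leftclass}\,1 (via $\V$-kernel-pairs from finite $\V$-limits) to exhibit $\StrMono_\V\B = (\Epi_\V\B)^{\downarrow_\V}$ as the right class of a $\V$-prefactorization-system, and then invoking \ref{thm:strmono_rightclass_epi_leftclass}\,2 only where $\V$-cokernel-pairs are assumed to identify the left class with $\Epi_\V\B$. The only difference is organizational: the paper sets up the single prefactorization system $\bigl((\Epi_\V\B)^{\downarrow_\V\uparrow_\V},(\Epi_\V\B)^{\downarrow_\V}\bigr)$ once, applies \ref{prop:enr_factn_sys_det_rightcls_monos} to obtain both conclusions simultaneously, and only afterward uses $\V$-cokernel-pairs to identify the left class as $\Epi_\V\B$ for part~1, whereas you treat the two parts separately.
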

\begin{proof}
Let $\M := \StrMono_\V\B$, $\E := \M^{\uparrow_\V}$.  By \bref{thm:strmono_rightclass_epi_leftclass} 1, $\M = (\Epi_\V\B)^{\downarrow_\V}$, so $(\E,\M) = ((\Epi_\V\B)^{\downarrow_\V\uparrow_\V},(\Epi_\V\B)^{\downarrow_\V})$ is a $\V$-prefactorization-system on $\B$.  We find that $(\E,\M)$ satisfies the hypotheses of \bref{prop:enr_factn_sys_det_rightcls_monos}, and we thus deduce both that $(\E,\M)$ is a $\V$-factorization-system and that 2 holds.  If $\B$ has $\V$-cokernel-pairs, then by \bref{thm:strmono_rightclass_epi_leftclass} 2, $\Epi_\V\B = \M^{\uparrow_\V} = \E$.
\end{proof}

\begin{CorSub} \label{thm:str_img_factns_for_fwc_tens_cot_vcat}
Suppose that $\B$ is a tensored, cotensored, and finitely well-complete $\V$-category.  Then $(\Epi_\V\B,\StrMono_\V\B) = (\Epi\B,\StrMono\B)$ is a $\V$-factorization-system on $\B$.
\end{CorSub}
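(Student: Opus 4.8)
The plan is to deduce the statement from the transfer result \bref{thm:tens_cot_epi_strmono_factn_sys_when_factns_exist}, whose conclusion is exactly what is asserted here; its only unverified hypothesis is that ordinary $(\Epi\B,\StrMono\B)$-factorizations exist in $\B$. Thus the entire task reduces to producing, for each morphism of the underlying ordinary category of $\B$, a factorization into an epimorphism followed by a strong monomorphism.

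To obtain these I would apply the ordinary CHK lemma \bref{thm:chk_lemma} with $\M := \StrMono\B$. Its three hypotheses are the standard stability properties of strong monomorphisms in a finitely well-complete ordinary category: strong monos are monomorphisms closed under composition; they are stable under pullback, which exists since $\B$ is finitely complete; and arbitrary intersections of strong monos exist and are again strong monos, the existence being finite well-completeness and the closure a standard property of strong monomorphisms (the ordinary counterpart of \bref{thm:stability_and_canc_for_strmonos}). Hence \bref{thm:chk_lemma} yields that $((\StrMono\B)^\uparrow,\StrMono\B)$ is an ordinary factorization system on $\B$. Since $\B$ is finitely complete it has both kernel pairs and finite products, so the ordinary theory recalled in \bref{rem:ord_epi_strm_prefactn} gives $(\StrMono\B)^\uparrow = \Epi\B$; therefore $(\Epi\B,\StrMono\B)$ is an ordinary factorization system, and in particular $(\Epi\B,\StrMono\B)$-factorizations exist.

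With this in hand, \bref{thm:tens_cot_epi_strmono_factn_sys_when_factns_exist} applies directly, $\B$ being tensored and cotensored, and yields that $(\Epi_\V\B,\StrMono_\V\B) = (\Epi\B,\StrMono\B)$ is a $\V$-factorization-system on $\B$, as required. I anticipate no genuine obstacle, the argument being essentially an assembly of earlier results; the points needing care are the verification of the CHK hypotheses for $\StrMono\B$ and the use of the finite-product clause of \bref{rem:ord_epi_strm_prefactn}---rather than any cokernel-pair hypothesis---to pin the left class down as $\Epi\B$, which is precisely what lets the result avoid assuming colimits in $\B$.
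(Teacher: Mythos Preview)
Your proposal is correct and essentially coincides with the paper's proof: both apply the ordinary CHK lemma \bref{thm:chk_lemma} to $\M=\StrMono\B$, invoke \bref{rem:ord_epi_strm_prefactn} (via finite products and kernel pairs from finite completeness) to identify the left class as $\Epi\B$, and then upgrade to a $\V$-factorization-system using the tensor/cotensor hypotheses. The only cosmetic difference is that you finish via \bref{thm:tens_cot_epi_strmono_factn_sys_when_factns_exist}, whereas the paper instead cites \bref{thm:ord_str_monos_in_tens_cotens_vcat_are_enriched} for the equality of the two pairs and \bref{thm:vfactn_sys_is_factn_sys} for the enriched conclusion; these are equivalent packagings of the same ingredients.
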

\begin{proof}
By \ref{thm:ord_str_monos_in_tens_cotens_vcat_are_enriched}, the given equation holds.  By \ref{rem:ord_epi_strm_prefactn}, the given pair is an ordinary prefactorization system, and we find that $\StrMono\B$ satisfies the hypotheses of \ref{thm:chk_lemma}.  Hence $((\StrMono\B)^\uparrow,\StrMono\B) = (\Epi\B,\StrMono\B)$ is a factorization system, and the result now follows by \ref{thm:vfactn_sys_is_factn_sys}.
\end{proof}

\begin{RemSub}\label{rem:str_img_factns_ordinary}
For an ordinary category $\B$, the hypotheses of \bref{thm:str_mono_vfs_on_vfwc_cot_vcat}, \bref{thm:str_img_factns_for_fwc_tens_cot_vcat} can be weakened:  It is shown in \cite{CHK} 3.2 that $(\Epi\B,\StrMono\B)$ is a factorization system as soon as $\B$ is finitely well-complete.  Similarly, the non-enriched analogue of \bref{thm:str_mono_vfs_on_vfwc_cot_vcat} 2 applies to arbitrary finitely well-complete ordinary categories and was employed in a certain instance within the proof of \cite{CHK} 3.3.
\end{RemSub}

\begin{ExaSub}\label{exa:str_factns_in_lp_or_top_v}
By \bref{exa:fwc}, if $\V$ is a symmetric monoidal closed category that is (i) locally presentable or (ii) topological over $\Set$, then both $\uV$ and $\uV^\op$ are finitely well-complete, tensored, and cotensored, so by two invocations of \bref{thm:str_img_factns_for_fwc_tens_cot_vcat} we find that $\uV$ carries $\V$-factorization-systems $(\Epi_\V\uV,\StrMono_\V\uV) = (\Epi\V,\StrMono\V)$ and $(\StrEpi_\V\uV,\Mono_\V\uV) = (\StrEpi\V,\Mono\V)$ .
\end{ExaSub}

\begin{ExaSub}\label{exa:epi_strmono_facts_in_valg_vcat}
Let $\V$ be a finitely well-complete \textit{$\pi$-category} \cite{BoDay}, such as any cartesian closed category that is locally presentable or topological over $\Set$ \pbref{exa:fwc}, and let $\B$ be a \textit{$\V$-algebraic} $\V$-category  \cite{BoDay}.  Since $\uV$ is $\V$-finitely-well-complete \pbref{thm:tens_cot_vcat_fwc} and $\B$ is $\V$-monadic over $\uV$ by \cite{BoDay} 2.2.2, we deduce by \bref{thm:cat_algs_vfwc} that $\B$ is $\V$-finitely-well-complete.  Hence, since $\B$ also has $\V$-cokernel-pairs by \cite{BoDay} 2.3.1 and is cotensored by \cite{Dub} II.4.7, we deduce by \bref{thm:str_mono_vfs_on_vfwc_cot_vcat} that $\B$ carries a $\V$-factorization-system $(\Epi_\V\B,\StrMono_\V\B)$.
\end{ExaSub}

\begin{CorSub}\label{thm:strepi_mono_vfs}
Let $\B$ be a cotensored $\V$-category with $\V$-cokernel-pairs, $\V$-pullbacks of $\V$-monos along arbitrary morphisms, and arbitrary $\V$-intersections of $\V$-monos.  Then $(\StrEpi_\V\B,\Mono_\V\B)$ is a $\V$-factorization-system on $\B$.
\end{CorSub}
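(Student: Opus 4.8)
The plan is to apply Corollary \bref{thm:enr_cor_of_chk} to the class $\M := \Mono_\V\B$ and then to identify the resulting left class $\M^{\uparrow_\V}$ with $\StrEpi_\V\B$. First I would verify that $\M = \Mono_\V\B$ satisfies the hypotheses demanded by \bref{thm:enr_cor_of_chk}. Every $\V$-mono is a monomorphism, and by \bref{thm:closure_props_of_monos} the class $\Mono_\V\B$ is closed under composition and under cotensors. The hypothesized $\V$-intersections and $\V$-pullbacks of $\V$-monos along arbitrary morphisms are in particular ordinary intersections and pullbacks, and they again lie in $\Mono_\V\B$ by \bref{prop:intersection} and \bref{thm:closure_props_of_monos}; hence the intersection- and pullback-existence hypotheses (i) and (ii) of \bref{thm:chk_lemma}, together with the closure under composition (iii), are met by taking these $\V$-limits as the required ordinary limits. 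Since $\B$ is cotensored, Corollary \bref{thm:enr_cor_of_chk} then yields that $(\M^{\uparrow_\V},\M) = (\M^\uparrow,\M)$ is a $\V$-factorization-system on $\B$, and in particular a $\V$-prefactorization-system.

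It then remains to show $\M^{\uparrow_\V} = \StrEpi_\V\B$. Dualizing Definition \bref{def:enr_str_mon}, one has $\StrEpi_\V\B = (\Mono_\V\B)^{\uparrow_\V} \cap \Epi_\V\B = \M^{\uparrow_\V} \cap \Epi_\V\B$, so it suffices to prove the inclusion $\M^{\uparrow_\V} \subs \Epi_\V\B$. Every section in $\B$ is a $\V$-mono by \bref{prop:section_reg_mono} and so lies in $\M$; since $\B$ has $\V$-cokernel-pairs, \bref{prop:pref_mono_epi} 2 applied to the $\V$-prefactorization-system $(\M^{\uparrow_\V},\M)$ delivers exactly $\M^{\uparrow_\V} \subs \Epi_\V\B$. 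Combining these, $\StrEpi_\V\B = \M^{\uparrow_\V}$, whence $(\StrEpi_\V\B,\Mono_\V\B) = (\M^{\uparrow_\V},\M)$ is a $\V$-factorization-system, as desired.

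The step I expect to require the most care is the verification of the hypotheses of \bref{thm:chk_lemma} as invoked through \bref{thm:enr_cor_of_chk}: that lemma is phrased in terms of \emph{ordinary} intersections and pullbacks, whereas the present statement furnishes only their \emph{$\V$-enriched} counterparts. The resolution is that $\V$-limits are automatically ordinary limits, so the supplied $\V$-intersections and $\V$-pullbacks serve as the intersections and pullbacks that the construction in \bref{thm:chk_lemma} requires, while the enriched closure results \bref{prop:intersection} and \bref{thm:closure_props_of_monos} guarantee that these limits again lie in $\Mono_\V\B$ and that $\Mono_\V\B$ is closed under composition and cotensors. Once this compatibility is secured, the remainder is a direct appeal to the cited results, the only genuinely enriched inputs in the left-class identification being that sections are $\V$-monos together with the hypothesized existence of $\V$-cokernel-pairs.
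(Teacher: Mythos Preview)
Your proof is correct and follows essentially the same approach as the paper: apply \bref{thm:enr_cor_of_chk} to $\M = \Mono_\V\B$ using the closure properties from \bref{thm:closure_props_of_monos}, then identify $\M^{\uparrow_\V}$ with $\StrEpi_\V\B$. The only cosmetic difference is that for the left-class identification the paper invokes the dual of \bref{thm:strmono_rightclass_epi_leftclass}~1 directly, whereas you unpack that argument by appealing to \bref{prop:section_reg_mono} and \bref{prop:pref_mono_epi}~2; these are exactly the ingredients used in the proof of \bref{thm:strmono_rightclass_epi_leftclass}, so the content is the same.
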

\begin{proof}
By \bref{thm:closure_props_of_monos}, $\M := \Mono_\V\B$ satisfies the hypotheses of \bref{thm:enr_cor_of_chk}, so $(\M^{\uparrow_\V},\M)$ is a $\V$-factorization-system.  But since $\B$ has $\V$-cokernel-pairs, the dual of \bref{thm:strmono_rightclass_epi_leftclass} 1 entails that $\M^{\uparrow_\V} = \StrEpi_\V\B$.
\end{proof}

\begin{CorSub}\label{thm:tens_cot_strepi_mono_vfs}
Let $\B$ be a tensored and cotensored $\V$-category with finite limits and arbitrary intersections of monos.  Then $(\StrEpi_\V\B,\Mono_\V\B) = (\StrEpi\B,\Mono\B)$ is a $\V$-factorization-system on $\B$.
\end{CorSub}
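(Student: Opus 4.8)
The plan is to reduce everything to the ordinary case through the tensored--cotensored dictionary and then re-enrich via the tensor characterization of $\V$-factorization-systems. First I would observe that, $\B$ being tensored and cotensored, so is $\B^\op$; applying \ref{thm:ord_str_monos_in_tens_cotens_vcat_are_enriched} to $\B^\op$ then gives $\StrEpi_\V\B = \StrMono_\V\B^\op = \StrMono\B^\op = \StrEpi\B$ and $\Mono_\V\B = \Epi_\V\B^\op = \Epi\B^\op = \Mono\B$, which disposes of the asserted equation $(\StrEpi_\V\B,\Mono_\V\B) = (\StrEpi\B,\Mono\B)$. It then remains only to prove that $(\StrEpi\B,\Mono\B)$ is a $\V$-factorization-system.

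Next I would show that $(\StrEpi\B,\Mono\B)$ is an ordinary factorization system by applying \ref{thm:chk_lemma} to the class $\M := \Mono\B$. Its hypotheses are immediate: arbitrary intersections of monos exist and are again monos by assumption; $\B$ has finite limits, so pullbacks of monos along arbitrary morphisms exist, and such a pullback is again mono; and $\Mono\B$ is closed under composition. Hence $((\Mono\B)^\uparrow,\Mono\B)$ is an ordinary factorization system. To rewrite the left class, I would invoke the dual of \ref{rem:ord_epi_strm_prefactn}: since $\B$ has finite limits it has finite products, and this alone yields $(\Mono\B)^\uparrow = \StrEpi\B$. Consequently $(\StrEpi\B,\Mono\B)$ is an ordinary factorization system.

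Finally I would re-enrich via \ref{thm:charn_factn_sys_tensors}. As $\B$ is cotensored, clause $2'$ of that theorem reduces the claim to checking that the right class $\Mono\B = \Mono_\V\B$ is closed under cotensors, which is precisely the cotensor-closure established in \ref{thm:closure_props_of_monos}. This shows $(\StrEpi\B,\Mono\B) = (\StrEpi_\V\B,\Mono_\V\B)$ is a $\V$-factorization-system, completing the argument.

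The one step I would treat with care is the identification $(\Mono\B)^\uparrow = \StrEpi\B$: here one must use the finite-products half of the dual of \ref{rem:ord_epi_strm_prefactn}, not cokernel pairs, since the hypotheses supply only finite limits and no colimits whatsoever; invoking the previous corollary \ref{thm:strepi_mono_vfs} directly is therefore not available, as its $\V$-cokernel-pair hypothesis cannot be met here. The remainder is routine bookkeeping with the tensored--cotensored equivalences.
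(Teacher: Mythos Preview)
Your proof is correct and follows essentially the same route as the paper's: both apply \ref{thm:chk_lemma} to $\Mono\B$, then identify $(\Mono\B)^\uparrow = \StrEpi\B$ via the finite-products clause of the dual of \ref{rem:ord_epi_strm_prefactn}, and use (the dual of) \ref{thm:ord_str_monos_in_tens_cotens_vcat_are_enriched} for the asserted equation. The only difference is in the re-enrichment step: you invoke \ref{thm:charn_factn_sys_tensors} $2'$ together with the cotensor-closure of $\Mono_\V\B$ from \ref{thm:closure_props_of_monos}, whereas the paper appeals to \ref{thm:vfactn_sys_is_factn_sys}, the required $\V$-orthogonality of each $e \in \StrEpi_\V\B$ to each $m \in \Mono_\V\B$ being immediate from the very definition of $\V$-strong-epi. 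Both closures are equally direct, so this is a cosmetic variation rather than a genuinely different argument; your cautionary remark about why \ref{thm:strepi_mono_vfs} is unavailable here is exactly right.
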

\begin{proof}
The analogue of \bref{thm:closure_props_of_monos} for ordinary categories entails that $\Mono\B$ satisfies the hypotheses of \bref{thm:chk_lemma}, so $((\Mono\B)^\uparrow,\Mono\B)$ is a factorization system on $\B$.  But since $\B$ has finite products, the dual of \bref{rem:ord_epi_strm_prefactn} entails that $(\Mono\B)^\uparrow = \StrEpi\B$.  Further, by (the dual of) \bref{thm:ord_str_monos_in_tens_cotens_vcat_are_enriched}, $(\StrEpi_\V\B,\Mono_\V\B) = (\StrEpi\B,\Mono\B)$, and by \bref{thm:vfactn_sys_is_factn_sys}, this factorization system is a $\V$-factorization-system.
\end{proof}

\begin{ExaSub}
Let $\V$ be a well-powered $\pi$-category \cite{BoDay}, such as a locally presentable cartesian closed category.  Again letting $\B$ be a $\V$-algebraic $\V$-category \cite{BoDay} as in \bref{exa:epi_strmono_facts_in_valg_vcat}, $\B$ is cotensored, and $\B$ has all small $\V$-limits and $\V$-colimits by \cite{BoDay} 2.3.1.  Also, since $\V$ is well-powered and $\B$ is monadic over $\V$, it follows that $\B$ is well-powered and hence well-powered with respect to $\V$-monos.  Therefore, by \bref{thm:strepi_mono_vfs}, $(\StrEpi_\V\B,\Mono_\V\B)$ is a $\V$-factorization-system on $\B$.
\end{ExaSub}

\begin{ExaSub}
Suppose that the underlying ordinary category of $\V$ is locally presentable, and let $\B$ be the $\V$-category of models in $\uV$ of a $\V$-enriched \textit{sketch} $(\A^\op,\Phi)$ (\cite{Ke:Ba} \S 6.3).  We deduce via \bref{thm:str_mono_vfs_on_vfwc_cot_vcat} and \bref{thm:strepi_mono_vfs} that $\B$ carries $\V$-factorization-systems \linebreak[4]$(\Epi_\V\B,\StrMono_\V\B)$ and $(\StrEpi_\V\B,\Mono_\V\B)$, as follows.  By \cite{Ke:Ba} Theorem 6.11, $\B$ is a $\V$-reflective sub-$\V$-category of $[\A^\op,\uV]$ and hence is tensored and cotensored and has all small $\V$-limits and $\V$-colimits.  Since $\A^\op$ is a small $\V$-category and $\uV$ is well-powered (with respect to monos and hence $\V$-monos), we deduce by \cite{Dub} IV.1.3 that $[\A^\op,\uV]$ is well-powered with respect to $\V$-monos.  Hence since the inclusion $\B \hookrightarrow [\A^\op,\uV]$ preserves $\V$-monos, $\B$ is well-powered with respect to $\V$-monos and so satisfies the hypotheses of \bref{thm:strepi_mono_vfs} and \bref{thm:str_mono_vfs_on_vfwc_cot_vcat} 1.
\end{ExaSub}

Applying the above results and their duals, we obtain the following.

\begin{ThmSub}\label{thm:summary}
Let $\B$ be a $\V$-category.
\begin{enumerate}
\item $(\Epi_\V\B,\StrMono_\V\B)$ is a $\V$-factorization-system on $\B$ as soon as one of the following holds:
\begin{enumerate}
\item $\B$ is cotensored, well-powered with respect to $\V$-strong-monos, and has small $\V$-limits and $\V$-cokernel-pairs.
\item $\B$ is cotensored and tensored, well-powered with respect to strong monos, and has small limits.
\item $\B$ is tensored, co-well-powered with respect to $\V$-epis, and has small $\V$-colimits and $\V$-kernel-pairs.
\item $\B$ is tensored and cotensored, co-well-powered, and has small colimits.
\end{enumerate}
\item Dually, $(\StrEpi_\V\B,\Mono_\V\B)$ is a $\V$-factorization-system as soon as one of the following holds:
\begin{enumerate}
\item $\B$ is tensored, co-well-powered with respect to $\V$-strong-epis, and has small $\V$-colimits and $\V$-kernel-pairs.
\item $\B$ is tensored and cotensored, co-well-powered with respect to strong epis, and has small colimits.
\item $\B$ is cotensored, well-powered with respect to $\V$-monos, and has small $\V$-limits and $\V$-cokernel pairs.
\item $\B$ is cotensored and tensored, well-powered, and has small limits.
\end{enumerate}
\end{enumerate}
\end{ThmSub}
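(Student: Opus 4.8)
The plan is to derive all eight implications from the existence theorems already established, by matching hypotheses and invoking duality; no new construction is needed. First I would record that, by \bref{rem:factn_dualization} together with the identities $\StrEpi_\V\B = \StrMono_\V\Op{\B}$ and $\Mono_\V\B = \Epi_\V\Op{\B}$, the pair $(\StrEpi_\V\B,\Mono_\V\B)$ is a $\V$-factorization-system on $\B$ exactly when $(\Epi_\V\Op{\B},\StrMono_\V\Op{\B})$ is one on $\Op{\B}$; moreover each hypothesis in Part~2 translates under $\B \mapsto \Op{\B}$ into the correspondingly-labelled hypothesis of Part~1 (tensored $\leftrightarrow$ cotensored, small $\V$-limits $\leftrightarrow$ small $\V$-colimits, $\V$-kernel-pairs $\leftrightarrow$ $\V$-cokernel-pairs, well-powered $\leftrightarrow$ co-well-powered). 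Hence Part~2 is just Part~1 applied to $\Op{\B}$, and it suffices to prove Part~1.

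For hypothesis 1(a) I would verify that $\B$ is $\V$-finitely-well-complete and then apply \bref{thm:str_mono_vfs_on_vfwc_cot_vcat}~1. The finite $\V$-limits demanded by \bref{def:enr_fwc} are supplied by the assumed small $\V$-limits. For the $\V$-intersection of an arbitrary class-indexed family of $\V$-strong-monos into a fixed object, well-poweredness with respect to $\V$-strong-monos lets me pass to a set of representative strong subobjects; the $\V$-fibre-product of this set is a small $\V$-limit, hence exists, and lies again in $\StrMono_\V\B$ by \bref{rem:fwc}. As $\B$ is cotensored with $\V$-cokernel-pairs, \bref{thm:str_mono_vfs_on_vfwc_cot_vcat}~1 then applies. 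For hypothesis 1(b) the ordinary theory plays the same role: small completeness together with well-poweredness with respect to strong monos makes $\B$ finitely well-complete by \bref{exa:fwc}, and since $\B$ is tensored and cotensored I would conclude by \bref{thm:str_img_factns_for_fwc_tens_cot_vcat}.

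For the tensored hypotheses 1(c) and 1(d) I would argue on $\Op{\B}$. Hypothesis 1(c) translates into: $\Op{\B}$ is cotensored, has small $\V$-limits and $\V$-cokernel-pairs, and is well-powered with respect to $\V$-monos; exactly as in case (a), well-poweredness plus small $\V$-limits furnishes the $\V$-pullbacks of $\V$-monos and the arbitrary $\V$-intersections of $\V$-monos required by \bref{thm:strepi_mono_vfs}, so that theorem applies to $\Op{\B}$ and yields $(\StrEpi_\V\Op{\B},\Mono_\V\Op{\B})$, i.e. $(\Epi_\V\B,\StrMono_\V\B)$ on $\B$ by the duality set up above. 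Hypothesis 1(d) translates into: $\Op{\B}$ is tensored and cotensored, has small limits, and is well-powered; then small limits give finite limits and, with well-poweredness, arbitrary intersections of monos, so \bref{thm:tens_cot_strepi_mono_vfs} applies to $\Op{\B}$ and gives the same conclusion.

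The only step requiring genuine care—and the one I expect to be the crux—is the reduction of an \emph{arbitrary} class-indexed (co)intersection to a small (co)limit: one must check that (co-)well-poweredness really collapses the indexing class to a set of distinct (strong) subobjects, respectively quotients, so that the relevant $\V$-fibre-product exists among the assumed small $\V$-limits and computes the intersection of the whole family. Alongside this I would keep the enriched and ordinary notions aligned: in the tensored-and-cotensored cases (b) and (d) the hypotheses are phrased in ordinary terms, and \bref{thm:ord_str_monos_in_tens_cotens_vcat_are_enriched} guarantees that ordinary strong monos, epis, and (co-)well-poweredness coincide with their $\V$-enriched counterparts, so that the ordinary input theorems in fact deliver $\V$-factorization-systems. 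No deeper obstruction arises; beyond this reduction the argument is pure hypothesis-matching and bookkeeping through the duality $\B \mapsto \Op{\B}$.
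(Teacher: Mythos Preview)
Your proposal is correct and follows essentially the same route as the paper: 1(a) via \bref{thm:str_mono_vfs_on_vfwc_cot_vcat}~1, 1(b) via \bref{thm:str_img_factns_for_fwc_tens_cot_vcat}, 1(c) and 1(d) via the duals of \bref{thm:strepi_mono_vfs} and \bref{thm:tens_cot_strepi_mono_vfs} respectively, and Part~2 by applying Part~1 to $\Op{\B}$. The extra care you take in reducing class-indexed (co)intersections to small ones via (co-)well-poweredness is the only point the paper leaves implicit, and your treatment of it is fine.
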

\begin{proof}
1(a) follows from \bref{thm:str_mono_vfs_on_vfwc_cot_vcat} 1, 1(b) from \bref{thm:str_img_factns_for_fwc_tens_cot_vcat}, 1(c) from the dual of \bref{thm:strepi_mono_vfs}, 1(d) from the dual of \bref{thm:tens_cot_strepi_mono_vfs}.  2 follows from 1 by dualizing.
\end{proof}

\bibliographystyle{amsplain}
\bibliography{bib}

\end{document}